\definecolor{awesome}{rgb}{1, 0.1, 0}
\definecolor{amber}{rgb}{1.0, 0.75, 0.0}
\definecolor{greenBleu}{rgb}{0.0, 0.6, 0.5}
\definecolor{britishracinggreen}{rgb}{0.0, 0.26, 0.15}
\definecolor{caputmortuum}{rgb}{0.7, 0.55, 0.5}
\newcommand\footnoteref[1]{\protected@xdef\@thefnmark{\ref{#1}}\@footnotemark}
\theoremstyle{plain}
\newtheorem{theorem}{Theorem}[section]
\newtheorem*{theorem*}{Theorem}
\newtheorem{prop}[theorem]{Proposition}
\newtheorem*{prop*}{Proposition}
\newtheorem{corollary}[theorem]{Corollary}
\newtheorem*{corollary*}{Corollary}
\newtheorem{lemma}[theorem]{Lemma}
\newtheorem*{lemma*}{Lemma}
\theoremstyle{definition}
\theoremstyle{remark}
\newtheorem{Rq}{Remark}[section]
\numberwithin{equation}{section}
\DeclareMathOperator{\eee}{e}
\DeclareMathOperator{\Id}{Id}
\newcommand{\be}{\begin{equation}}
\newcommand{\ee}{\end{equation}}
\newcommand{\bes}{\begin{equation*}}
\newcommand{\ees}{\end{equation*}}
\newcommand{\R}{\mathbb{R}} 
\newcommand{\C}{\mathbb{C}} 
\newcommand{\N}{\mathbb{N}} 
\newcommand{\T}{\mathbb{T}}
\newcommand{\Nzero}{\mathbb{N}_{\geq0}}
\newcommand{\qlq}{\forall}
\newcommand{\e}{\exists}
\newcommand{\bk}{\backslash}
\newcommand{\p}{\partial}
\newcommand{\eps}{\varepsilon}
\newcommand{\del}{\delta}
\newcommand{\Ltwo}{L^2_+(\T)}
\newcommand{\LtwoR}{L^2_+(\R)}
\newcommand{\lracc}[1]{\left\{#1\right\}} 
\newcommand{\ueps}{u^\eps}
\newcommand{\zd}{ZD_{\pm}[u_0]}
\newcommand{\zdp}{ZD_{+}[u_0]}
\newcommand{\zdm}{ZD_{-}[u_0]}
\newcommand{\vertiii}[1]{{\vert\kern-0.25ex\vert\kern-0.25ex\vert #1 
    \vert\kern-0.25ex\vert\kern-0.25ex\vert}}
\newcommand{\va}[1]{\lvert#1\rvert}
\newcommand{\ps}[2]{\left\langle #1\,|\, #2  \right\rangle}
\newcommand{\fr}{\widehat}
\let\oldtocsection=\tocsection
\let\oldtocsubsection=\tocsubsection
\let\oldtocsubsubsection=\tocsubsubsection
\renewcommand{\tocsection}[2]{\hspace{0em}\oldtocsection{#1}{#2}}
\renewcommand{\tocsubsection}[2]{\hspace{2em}\oldtocsubsection{#1}{#2}}
\renewcommand{\tocsubsubsection}[2]{\hspace{4.7em}\oldtocsubsubsection{#1}{#2}}
\pgfplotsset{compat=1.18}
\begin{document}


\title[Zero dispersion limit of (CM)]{Zero dispersion limit of the Calogero--Moser derivative NLS equation}
\author{Rana Badreddine}
\address{Universit\'e Paris-Saclay, Laboratoire de math\'ematiques d'Orsay, UMR 8628 du CNRS, B\^atiment 307, 91405 Orsay Cedex, France}
	\email{\href{mailto: rana.badreddine@universite-paris-saclay.fr}{rana.badreddine@universite-paris-saclay.fr} }

	\subjclass{37K10 primary, 30H10 secondary}
	\keywords{Calogero--Sutherland--Moser systems, Derivative nonlinear Schrödinger equation (DNLS),  Explicit solution, Hardy space, semiclassical limit, zero disperion limit}
	\date{\today}
\maketitle

\begin{abstract}
    We study the zero--dispersion limit of the Calogero--Moser derivative NLS equation
    $$
          i\p_tu+\p_x^2 u \,\pm\,2D\Pi(|u|^2)u=0 \,, \qquad x\in\R\,,
    $$
    starting from an initial data $u_0\in L^2_+(\R)\cap L^\infty (\R)\,,$ where $D=-i\p_x\,,$ and $\Pi$ is the Szeg\H{o} projector defined as  $\fr{\Pi u}(\xi)=\mathds{1}_{[0,+\infty)}(\xi)\fr{u}(\xi)\,.$
    We characterize the zero--dispersion limit solution by an explicit formula. Moreover, we identify it, in terms of the branches of the multivalued solution of the inviscid Burgers--Hopf equation. Finally, we infer that it satisfies a maximum principle.
\end{abstract}

\tableofcontents

\section{Introduction}
We consider a nonlocal nonlinear Schrödinger equation called \textit{the Calogero--Moser derivative nonlinear Schrödinger equation}
\be\label{CM}\tag{CM}
    i\p_tu+\p_x^2 u \,\pm\,2D\Pi(|u|^2)u=0 \,, \qquad x\in\R\,,
\ee
where $D=-i\p_x\,,$ and $\Pi\equiv \Pi_+$ is the Szeg\H{o} projector
\[
\Pi u (x)=\int_\R \frac{u(y)}{y-x}\frac{\mathrm{d}y}{2\pi i}\,,
\]
which is an orthogonal projector from $L^2(\R)$ into the Hardy space 
\begin{align*}
    L^2_+(\R):=&\,\{u \in L^2(\R)\, ,\, \operatorname{supp} \fr{u}\subseteq [0,+\infty[\,\}
    \\
    \cong&\,\{u\in \operatorname{Hol}(\C_+)\,,\, \sup_{y>0}\int_{\R}\va{u(x+iy)}^2 dx<+\infty\}\,,
\end{align*}
with $\C_+:=\lracc{z\in\C\,;\,\operatorname{Im}(z)>0}$\,.
Typically, in Fourier transform, $\Pi$ can be read as
\be\label{Szego proj}
    \fr{\Pi u}(\xi)=\mathds{1}_{[0,+\infty)}(\xi)\,\fr{u}(\xi)\,.
\ee
This equation comes in two versions: one with the ``$+$" sign in front of the nonlinearity, referring to the focusing equation, and the other with the ``$-$" sign, referring to the defocusing equation. Through this paper, the $\pm$ and $\mp$ symbols will be interchanged based on the following rule: the upper sign will correspond to the focusing case and the lower sign to the defocusing case.
\vskip0.25cm
It is known since the work of \cite{GL22} that the focusing \eqref{CM}--equation is globally well--posedness in $H^k_+(\R):=H^k(\R)\cap L^2_+(\R)\,,$  $k\in\N_{\geq 1}$ for small initial data ($\|u_0\|_{L^2}<\sqrt{2\pi}$)\,. 
This was achieved by establishing a uniform $H^k$--bound of the solution $u(t)$ over time. The same line of arguments enable the global well--posedness of the defocusing equation in $H^k_+(\R)\,,$ $k\in\N_{\geq 1}$
for any initial data $u_0\,.$
Subsequently, the case on the torus ($x\in\T$) has been investigated by the author under the name of \textit{Calogero–Sutherland
DNLS equation} \cite{Ba23a, Ba23b}\,, where  the GWP has been established in $H^s_+(\T)$\,, $s\geq 0$ in the focusing and defocusing cases, with small initial data in the focusing case.
Later, \cite{KLV23} extended the GWP results on $\R\,,$ from the high regularity spaces \cite{GL22} up to the scaling--critical space $L^2_+(\R)\,.$ 
More recently, \cite{HK24} established the blow--up of the $H^s$--norm's solution, in a time $T \in(0,+\infty]$ for initial data $u_0\in H^\infty_+(\R)$ satisfying $\|u_0\|_{L^2}^2=2\pi+\eps\,,$ for any $\eps>0\,.$
\vskip0.25cm

From a physical standpoint, the scenarios described by the Calogero--Moser DNLS equation share notable similarities with the \textbf{Benjamin--Ono equation}. In both cases, they characterize weakly nonlinear dispersive internal waves located at the interface between two fluid layers of different densities, with the lower layer having infinite depth \cite{BLS08,Sa19,Pe95}. In the context of the Benjamin--Ono equation, the solution delineates the progression of these internal waves. On the other hand, concerning the \eqref{CM}--equation, it illustrates a model for the envelope of approximately monochromatic waves within the aforementioned settings.

\vskip0.25cm

Recently, G\'erard \cite{Ge23} studied the Benjamin--Ono equation with small dispersion $\eps>0\,,$ described as
\be\label{Bo-eps}\tag{BO-eps}
\begin{cases}
    \p_t u^\eps +\p_x((u^\eps)^2)=\eps \va{D}\p_x u^\eps
    \\
     u^\eps|_{t=0}=u_0
\end{cases}\,,
\ee
where he established that the weak limit in $L^2$ of $u^\eps\,,$ as $\eps$ approaches $0\,,$ is characterized in terms of the branches of the multivalued solution of the \textbf{inviscid Burgers--Hopf equation}. Novelty, this characterization holds for any $u_0\in L^2(\R)\,,$ with $u_0$ is a $C^1$ function tending to $0$ at infinity as well as its first derivative \cite{Ge23}\,. Observe that one can discern the emergence of the Burgers equation by formally taking the limit in \eqref{Bo-eps}  as $\eps\to 0$\,.
The act of neglecting the dispersion component in the equation is commonly acknowledged in the literature as the ``zero-dispersion limit" or ``semiclassical limit". In the following, we will use the terminology of `zero-dispersion limit"\,. Additionally,  we will refer to the weak $L^2$--limit of $u^\eps$ when $\eps\to0$\,, as ``the weak zero--dispersion limit solution''.  It's important to note that the selection of initial data, represented by $u_0$, remains independent of $\eps$\,.

\vskip0.25cm

In this paper, we propose to investigate the zero--dispersion limit problem for the Calogero--Moser DNLS equation.
Thus, we consider the rescaled version of \eqref{CM} with small dispersion  $\eps>0\,,$
\be\label{CM-eps-intro}\tag{CM-eps}
\begin{cases}
    i\p_t\ueps\,+\eps\,\p_x^2 \ueps \,\pm\,2D\Pi(|\ueps|^2)\ueps=0
    \\
     u^\eps|_{t=0}=u_0
\end{cases}\,.
\ee
The aim is to write the weak limit in $L^2$ of the solution $\ueps$ of \eqref{CM-eps-intro}\,, as $\eps\to 0$\,,
in terms of the branches of the multivalued solution of the Burgers equation. However, here, it is less evident compared to the Benjamin--Ono case, why the Burgers equation emerges in this context. 
For this purpose, observe that when formally taking $\eps\to 0\,,$ the \eqref{CM-eps-intro} becomes
\begin{equation}
        \label{CM-zero}\tag{CM--zero}
            i\p_tu\,\pm\,2D\Pi(|u|^2)u=0\,.
\end{equation}
Consequently,  if $u$ solves the previous equation, then $\boldsymbol{v}=\va{u}^2$ solves the Burgers equation
\be\label{Burgers equation}
        \p_t \boldsymbol{v}=\pm 2\boldsymbol{v}\, \p_x \boldsymbol{v}
\ee       
as
    \begin{align*}
        \p_t \boldsymbol{v}
        =&2\operatorname{Re}(\p_tu \bar{u})
        \\
        =&\pm4\operatorname{Re}(\p_x\Pi(\va{u}^2)\va{u}^2)\notag
        \\
        =&\pm2\,\big(\p_x\Pi(\va{u}^2)+\overline{\p_x\Pi(\va{u}^2)} \big)\va{u}^2\notag
        \\
        =&\pm\p_x \va{u}^4\ =\,\pm \p_x \boldsymbol{v}^2\,\notag
        \\
        =&\pm 2\boldsymbol{v}\, \p_x \boldsymbol{v}\notag\,.
\end{align*}


\vskip0.25cm


But before proceeding, it is essential to prove the existence of a weak zero dispersion limit for \eqref{CM}\,. The upcoming theorem seeks to establish this existence and even to characterize this $L^2$--weak limit explicitly as an element of the Hardy space.
The notation $\zdp$ represents the weak zero-dispersion limit solution in the focusing case for \eqref{CM}, and $\zdm$ corresponds to the one in the defocusing case.

\begin{theorem}\label{weak limit theorem zero dispersion}
    Given an initial data $u_0\in\LtwoR\cap L^\infty(\R)$  (with $\|u_0\|_{L^2}< \sqrt{2\pi}$~\footnote{The constant $\sqrt{2\pi}$ is to ensure the GWP of \eqref{CM} in the focusing case.} in the focusing case), the weak (in $L^2$--space) zero-dispersion limit solution $\zd$ of \eqref{CM} exists, and is characterized via the following explicit formula
    \be\label{explicit formula R zero dispersion-intro}
        \zd(t,z)=\Big(\operatorname{Id}\,\mp\,2tT_{u_0}T_{\bar{u}_0}(X^*-z)^{-1}\Big)^{-1}u_0(z)\,, \qquad t\in\R \,,\, z\in\C_+\,,
    \ee
    where the operators $T_{v}$ and  $X^*$  are defined respectively at \eqref{Tv} and \eqref{X*}\,.
    In addition, we have
    \[
    \|\zd(t)\|_{L^2}\leq \|u_0\|_{L^2}\,.
    \]
    Furthermore,
    if $u_0^n\to u_0$ strongly in $L^2$ as $n\to \infty\,,$ with $\sup_n\|u_0^n\|_{L^\infty}<+\infty$\,,  then for all $T>0$,
    \[
        \sup_{t\in[-T,T]}\va{ZD_{\pm}[u_0^n](t)- \zd(t)}\underset{n\to \infty}{\xrightharpoonup{\hspace*{0.5cm}}} 0 \ \text{in } L^2(\R)\,.
    \]
\end{theorem}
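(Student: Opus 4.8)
The plan is to read the asserted continuity off the explicit formula \eqref{explicit formula R zero dispersion-intro}, which by the first part of the theorem represents $ZD_\pm[u_0^n](t)$ (in the focusing case for $n$ large, since $\|u_0^n\|_{L^2}\to\|u_0\|_{L^2}<\sqrt{2\pi}$). Write $E_v(t,z):=\operatorname{Id}\mp 2t\,T_vT_{\bar v}(X^*-z)^{-1}$, so that $ZD_\pm[v](t,z)=\big[E_v(t,z)^{-1}v\big](z)$ for $v\in\LtwoR\cap L^\infty(\R)$, $t\in\R$, $z\in\C_+$. The crucial preliminary is a \emph{uniform invertibility bound}. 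The resolvent identity gives the factorisation $E_v(t,z)=\big(X^*\mp 2t\,T_vT_{\bar v}-z\big)(X^*-z)^{-1}$, hence $E_v(t,z)^{-1}=\operatorname{Id}\pm 2t\,T_vT_{\bar v}\big(X^*\mp 2t\,T_vT_{\bar v}-z\big)^{-1}$. Since $X^*$ has numerical range in the closed lower half--plane (see \eqref{X*}) and $T_vT_{\bar v}=T_vT_v^{*}$ is bounded and self--adjoint, so has $X^*\mp 2t\,T_vT_{\bar v}$; hence $\big(X^*\mp 2t\,T_vT_{\bar v}-z\big)^{-1}$ is bounded for $z\in\C_+$ with norm $\leq 1/\operatorname{Im}(z)$, and
\[
\big\|E_v(t,z)^{-1}\big\|_{\mathcal B(\LtwoR)}\ \leq\ 1+\frac{2\,\va{t}\,\|v\|_{L^\infty(\R)}^{2}}{\operatorname{Im}(z)}\,.
\]
This bound depends on $v$ only through $\|v\|_{L^\infty(\R)}$; this is exactly where the assumption $\sup_n\|u_0^n\|_{L^\infty(\R)}<\infty$ enters, making all the estimates below uniform in $n$.

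Next I would prove that the symbol--to--Toeplitz map is strongly continuous: if $v_n\to v$ in $L^2(\R)$ with $\sup_n\|v_n\|_{L^\infty(\R)}\leq C$, then $T_{v_n}\to T_v$ in the strong operator topology with $\sup_n\|T_{v_n}\|\leq C$, because $\|T_b\|\leq\|b\|_{L^\infty}$, $\LtwoR\cap L^\infty(\R)$ is dense in $\LtwoR$, and $\|(v_n-v)g\|_{L^2}\leq\|g\|_{L^\infty}\|v_n-v\|_{L^2}\to 0$ for $g\in L^2(\R)\cap L^\infty(\R)$ (split into three terms). Applying this to $u_0^n\to u_0$ and $\bar u_0^n\to\bar u_0$, and post--composing with the fixed bounded operator $(X^*-z)^{-1}$, gives $T_{u_0^n}T_{\bar u_0^n}(X^*-z)^{-1}\to T_{u_0}T_{\bar u_0}(X^*-z)^{-1}$ strongly (with uniform norm bound) for each fixed $(t,z)$. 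Together with the uniform invertibility bound, the identity $A_n^{-1}-A^{-1}=A_n^{-1}(A-A_n)A^{-1}$ yields $E_{u_0^n}(t,z)^{-1}\to E_{u_0}(t,z)^{-1}$ strongly, whence $E_{u_0^n}(t,z)^{-1}u_0^n\to E_{u_0}(t,z)^{-1}u_0$ in $\LtwoR$; pairing with the Cauchy reproducing kernel $k_z\in\LtwoR$ ($\ps{f}{k_z}=f(z)$) gives $ZD_\pm[u_0^n](t,z)\to ZD_\pm[u_0](t,z)$ for each $z\in\C_+$.

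To upgrade this, first I would make it uniform in $t\in[-T,T]$: the map $t\mapsto E_{u_0}(t,z)$ is affine, hence $t\mapsto E_{u_0}(t,z)^{-1}$ is norm--continuous and $\{E_{u_0}(t,z)^{-1}u_0:t\in[-T,T]\}$ is a compact subset of $\LtwoR$; since strong operator convergence is uniform on norm--compact sets of vectors, and the $t$--dependence enters only through the bounded factor $2t$ and through this norm--continuous family, repeating the previous estimates gives $\sup_{t\in[-T,T]}\va{ZD_\pm[u_0^n](t,z)-ZD_\pm[u_0](t,z)}\to 0$ for each $z\in\C_+$. Then I would pass to weak $L^2(\R)$ convergence: by the mass bound of the theorem $\|ZD_\pm[u_0^n](t)\|_{L^2(\R)}\leq\|u_0^n\|_{L^2(\R)}\leq\sup_m\|u_0^m\|_{L^2(\R)}=:M$ for all $t,n$, so the differences $ZD_\pm[u_0^n](t)-ZD_\pm[u_0](t)$ form a bounded family in $\LtwoR$, uniformly in $t\in[-T,T]$; as the kernels $\{k_z:z\in\C_+\}$ are total in $\LtwoR$, a three--$\varepsilon$ argument (approximate $\Pi h$ by a finite linear combination of kernels, bound the remainder by $M$, use the uniform--in--$t$ convergence at the kernels) gives $\sup_{t\in[-T,T]}\va{\ps{ZD_\pm[u_0^n](t)-ZD_\pm[u_0](t)}{h}}\to 0$ for all $h\in L^2(\R)$, which is the stated convergence $\sup_{t\in[-T,T]}\va{ZD_\pm[u_0^n](t)-ZD_\pm[u_0](t)}\rightharpoonup 0$ in $L^2(\R)$.

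The main obstacle, and the reason the conclusion is only weak convergence, is the degeneration of the invertibility bound like $1/\operatorname{Im}(z)$ as $z\to\R$: the convergence $ZD_\pm[u_0^n](t,z)\to ZD_\pm[u_0](t,z)$ is locally uniform in $\C_+$ but not up to the boundary, so one cannot get norm convergence of the boundary traces and must pass to the limit through point evaluations. The genuinely technical point is to make everything uniform in $n$, which the factorisation of $E_v(t,z)$ and the numerical--range location of $X^*$ achieve using only $\|u_0^n\|_{L^\infty(\R)}$; the uniformity in $t$ on compact intervals then follows softly from compactness of $[-T,T]$ and norm--continuity of $t\mapsto E_{u_0}(t,z)^{-1}$.
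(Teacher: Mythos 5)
Your proposal addresses only the last assertion of the theorem, the stability statement \eqref{cv faible du flot}, and takes the first two assertions (existence of the weak zero--dispersion limit and the explicit formula \eqref{explicit formula R zero dispersion-intro}) as given. That is the gap: the core of the theorem, and the entirety of the paper's proof, is precisely the derivation of \eqref{explicit formula R zero dispersion-intro}. The paper's argument runs as follows: (i) by Theorem~\ref{GWP CM in L2 + Formule explicit} the solution $\ueps$ of \eqref{CM-eps} conserves the $L^2$--norm, so Banach--Alaoglu yields a weak limit point $\zd(t)$ with $\|\zd(t)\|_{L^2}\le\|u_0\|_{L^2}$; (ii) the scaling identity $\ueps(t)=\sqrt{\eps}\,\mathcal S(\eps t)[u_0/\sqrt{\eps}]$, where $\mathcal S$ is the flow of \eqref{CM}, turns the explicit formula \eqref{explicit formula CM} into $\ueps(t,z)=\frac{1}{2\pi i}I_+((X^*+2t\eps D\mp 2tT_{u_0}T_{\bar u_0}-z)^{-1}u_0)$; (iii) conjugating by $\eee^{i\eps tD^2}$, which intertwines $X^*+2\eps tD$ with $X^*$ on the Fourier side, and removing $I_+$ via \eqref{remove I+} gives \eqref{u-eps sans I+}; (iv) one then lets $\eps\to0$, using $\eee^{-i\eps tD^2}u_0\to u_0$ in $L^2$ and the boundedness of the conjugated operator. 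None of this appears in your write-up, so the identification of the limit --- the main content --- is unproven. Note also that without step (i) one does not even know a priori that weak limit points exist and lie in $\LtwoR$, nor the bound $\|\zd(t)\|_{L^2}\leq \|u_0\|_{L^2}$ that you invoke when passing to weak convergence.

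That said, the part you do prove is correct and is in fact more than the paper supplies: the published proof stops after deriving \eqref{explicit formula R zero dispersion-intro} and leaves \eqref{cv faible du flot} implicit. Your uniform resolvent bound $\|E_v(t,z)^{-1}\|\le 1+2\va{t}\,\|v\|_{L^\infty}^2/\operatorname{Im}(z)$, obtained from the factorisation through $(X^*\mp2tT_vT_{\bar v}-z)^{-1}$ and the observation that $\operatorname{Im}\ps{X^*f}{f}=-\tfrac12\va{\fr{f}(0^+)}^2\le0$, together with the strong continuity of $v\mapsto T_v$ along $L^\infty$--bounded, $L^2$--convergent sequences, is exactly the kind of quantitative input needed both for \eqref{cv faible du flot} and, incidentally, to justify rigorously the limit passage $\eps\to0$ in \eqref{u-eps sans I+} (with the $\eee^{\mp i\eps tD^2}$--conjugation playing the role of your symbol perturbation). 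To be a proof of the theorem as stated, your submission must be prefaced by steps (i)--(iv) above.
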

\vskip0.25cm
Usually, when considering the scenario of zero-dispersion limit, the emergence of shocks can be observed. These shocks manifest as we begin to neglect dispersive effects, allowing the nonlinear term to dominate. 
With the existence of the weak zero-dispersion limit established in the previous theorem, our objective in the following theorem is to highlight these shocks, by 
addressing the connection between this zero-dispersion limit solution of \eqref{CM} and the branches of the multivalued solution of the inviscid Burgers equation, which is known for its tendency to exhibit shock formations.

\begin{theorem}\label{Theorem solution mutivalué- intro}
    Let $u_0\in L^2_+(\R)$ (with $\|u_0\|_{L^2}< \sqrt{2\pi}$\, in the focusing case), such that  $u_0$ is a $C^1$ function tending to $0$ at infinity, with a bounded derivative in $L^\infty(\R)\,$.\footnote{~Note that any function in $H^{s}_+(\R):=H^{s}(\R)\cap \LtwoR\,,$ $s>\frac32$ satisfies these conditions.} Then, for every time $t\in\R\,,$ 
    and for almost every $x\in\R\,,$ the algebraic equation
    \be\label{sol of the branches}
        y\mp2t\va{u_0(y)}^2=x
    \ee
    has an odd number of simple real solutions
    $
        y_0:=y_0(t,x)\,<\, \ldots\,<\, y_{2\ell}:=y_{2\ell}(t,x)\,,
    $
    and the zero--dispersion limit of \eqref{CM} is given 
    by
    \be\label{zd[u0]-multivalue solution-intro}
        \zd (t,x)=\mathrm{e}^{i \varphi(t,x)}\left(\mp i\, \frac{\va{t}}{t}\right)^\ell \prod_{k=0}^{2\ell}\va{u_0(y_k)}^{(-1)^k} ,
\ee
where 
\[
    \varphi(t,x)=\arg(u_0(x))+\frac{1}{2\pi}\int_0^{+\infty}
    \frac{1}{s}\log\left(\frac{\ s\mp 2t \va{u_0(x+s)}^2}{-s\mp 2t \va{u_0(x-s)}^2}\frac{\prod_{k=0}^{2\ell}(x-s-y_k)}{\prod_{k=0}^{2\ell}(x+s-y_k)}\right)\mathrm{d}s\,.
\]
\end{theorem}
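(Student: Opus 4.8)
The plan is to start from the explicit formula \eqref{explicit formula R zero dispersion-intro} established in Theorem~\ref{weak limit theorem zero dispersion} and to reinterpret it via the spectral/functional calculus of the underlying integrable structure. First I would recall that the operators $T_{u_0}$, $T_{\bar u_0}$ and $X^*$ act on the Hardy space $\LtwoR$, and that $X^*$ is (up to adjoint) the generator of translations, so that resolvents $(X^*-z)^{-1}$ reproduce the Cauchy kernel. The key algebraic identity to exploit is that $T_{u_0}T_{\bar u_0}$ is, in an appropriate sense, multiplication by $|u_0|^2$ modulo the projector $\Pi$; conjugating by the Fourier transform or by the Lax operator $L_{u_0}$ should turn the operator $\operatorname{Id}\mp 2tT_{u_0}T_{\bar u_0}(X^*-z)^{-1}$ into something whose inverse can be computed by residues. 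Concretely, I expect that evaluating $\zd(t,z)$ as $z\to x\in\R$ from $\C_+$ reduces, after a resolvent/Fredholm expansion, to summing contributions of the poles of $(X^*-z)^{-1}$, and these poles are located exactly at the real roots $y_k=y_k(t,x)$ of the algebraic equation \eqref{sol of the branches} $y\mp 2t|u_0(y)|^2=x$, because the symbol $y\mapsto y\mp 2t|u_0(y)|^2-x$ is precisely what appears in the denominator.

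The second main step is the combinatorics of the residues. The equation \eqref{sol of the branches} has an odd number $2\ell+1$ of simple real roots for a.e.\ $x$ (this follows because $y\mapsto y\mp 2t|u_0(y)|^2$ is a proper $C^1$ map of $\R$ to $\R$ tending to $\pm\infty$, so its preimage of a generic point is a finite set with an odd number of sign-alternating crossings, $\mp 2t|u_0'|^2$ having bounded derivative). I would order them $y_0<\cdots<y_{2\ell}$ and track the sign of the derivative $1\mp 4t\operatorname{Re}(\bar u_0 u_0')(y_k)$ at each root, which alternates in sign; this alternation is what produces the exponents $(-1)^k$ on $|u_0(y_k)|$ in \eqref{zd[u0]-multivalue solution-intro}, while the count $\ell$ of ``folded'' branches produces the prefactor $(\mp i\,|t|/t)^\ell$ (each shock contributing a factor related to $\sqrt{-1}$ through the square-root-type singularity of the inverse map). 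The modulus of $\zd(t,x)$ should then match $\prod_k |u_0(y_k)|^{(-1)^k}$, consistent with the fact that $|\zd|^2$ should be a branch of the multivalued Burgers solution $\boldsymbol v$ from \eqref{Burgers equation}: indeed $|u_0(y_0)|^2$ is the value of $\boldsymbol v$ obtained by the method of characteristics along the single-valued branch, and the telescoping product collapses to it when $\ell=0$.

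The third step is to pin down the phase $\varphi(t,x)$. I would write $\zd(t,x)=|\zd(t,x)|\,e^{i\varphi(t,x)}$ and compute $\varphi$ from the argument of the explicit formula. Since $\zd(t,\cdot)\in\LtwoR$ lies in the Hardy space, $\log\zd(t,\cdot)$ (where $\zd$ does not vanish) has boundary values whose real and imaginary parts are Hilbert-transform conjugates; thus $\varphi$ is essentially the Hilbert transform of $\log|\zd|$ plus the ``outer part'' correction $\arg(u_0(x))$. Carrying this out, $\log|\zd(t,x)|=\sum_k (-1)^k\log|u_0(y_k)|$ and one rewrites the Hilbert transform of this sum; a careful bookkeeping of the roots $y_k$ as functions of the shifted base point $x\pm s$, together with the identity $s\mp 2t|u_0(x\pm s)|^2=\prod_k((x\pm s)-y_k(t,x\pm s))\cdot(\text{something})$, should telescope into exactly the integral displayed for $\varphi(t,x)$. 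The principal-value integral $\frac1{2\pi}\int_0^\infty \frac1s\log(\cdots)\,ds$ is the symmetrized form of this Hilbert transform.

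The hardest part, I expect, will be making the residue/Fredholm computation of $(\operatorname{Id}\mp 2tT_{u_0}T_{\bar u_0}(X^*-z)^{-1})^{-1}u_0$ rigorous and extracting the boundary value as $z\to x$: one must show the inverse operator exists for $z\in\C_+$ (invertibility away from the real axis, presumably from a resolvent estimate or from Theorem~\ref{weak limit theorem zero dispersion} itself), show that its action on $u_0$ extends continuously to a.e.\ real $x$, and correctly account for the branch-point contributions where two roots $y_k$ collide (the shock set, which is Lebesgue-null and hence excluded by the ``almost every $x$'' qualifier). Handling the square-root singularities at the shocks, and verifying that the limiting phases from $\C_+$ produce precisely the factor $(\mp i\,|t|/t)^\ell$ rather than its complex conjugate, is the delicate orientation issue; I would resolve it by testing on an explicit rational example (e.g.\ a soliton-type $u_0$) to fix all signs, then argue by density using the stability statement at the end of Theorem~\ref{weak limit theorem zero dispersion}.
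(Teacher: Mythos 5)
Your overall architecture --- start from the explicit formula \eqref{explicit formula R zero dispersion-intro}, locate the singularities at the roots of $y\mp 2t|u_0(y)|^2=x$, recover the phase as a Hilbert transform of $\log|\zd|$, fix signs on a rational example, and conclude by density via \eqref{cv faible du flot} --- matches the paper's skeleton, and your argument for the odd number of simple real roots for a.e.\ $x$ (properness of $\gamma_t$ plus Sard) is essentially the paper's. But the central computation is asserted rather than performed, and the mechanism you propose is not the one that works. There is no ``residue/Fredholm expansion'' of $(\operatorname{Id}\mp 2tT_{u_0}T_{\bar u_0}(X^*-z)^{-1})^{-1}$ for general $u_0$: the paper first restricts to rational $u_0=P/Q$, where the partial-fraction decomposition makes $T_{u_0}T_{\bar u_0}$ equal to multiplication by $|u_0|^2$ minus a \emph{finite-rank} operator. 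The resolvent equation then becomes the scalar relation \eqref{f[tz]} with finitely many unknowns $f_{t,z}(-p_j)$ and $\zd(t,z)$. Crucially, the relevant singularities are not ``poles of $(X^*-z)^{-1}$ at the real roots'' (that resolvent has none for $z\in\C_+$); they are the zeros in $\overline{\C_+}$ of the \emph{holomorphic extension} $y\mapsto y\mp 2t\,v_0(y)-z$ with $v_0=P\bar P/(Q\bar Q)$, namely the $\ell+1$ real roots at which $\gamma_t$ is increasing together with $N-\ell$ genuinely complex roots. Requiring the numerator of $f_{t,z}$ to vanish at these $N+1$ points yields a linear system, solved by Cramer's rule and Cauchy/Vandermonde determinant identities, giving $\zd(t,x)=P(x)/\prod_{k=1}^{N}(x-y_{2k-1})$ --- a formula that still contains the complex roots. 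Your proposal never introduces $v_0$ or the complex roots, so the system you would need to solve is not even set up.

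The second missing idea is how the complex roots are eliminated in favor of the real ones. This is not a telescoping of the Hilbert transform of $\sum_k(-1)^k\log|u_0(y_k)|$: the paper applies the Szeg\H{o} projector to $\log\big((y^2+a^2)^\ell g_{t,x}(y)\big)$, where $g_{t,x}$ encodes the ratio of the complex-root product to $|Q|^2$, uses the uniqueness of the $L^2_+\oplus L^2_-$ splitting to identify the outer part, and then evaluates via Lemma~\ref{Pi+} and the explicit integral \eqref{2 pi arctan}. The prefactor $(\mp i\,|t|/t)^\ell$ emerges from that $\arctan$ computation and from $\sqrt{g_{t,x}(x)}$, not from ``square-root singularities at the shocks'' --- the shock set is excluded from the start by the a.e.\ qualifier, and no branch-point analysis is needed. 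Your step-three phase formula is the right target, and your instinct that $\arg u_0$ appears as an outer-part correction is sound, but without the rational reduction, the determinant evaluation, and the $\Pi_+\log$ device, the proof does not go through.
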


\begin{Rq}
    When $\ell>0$\,, then any solution $y_k:=y_k(t,x)$\,, $k\in\lracc{0\,,\ldots\,, 2\ell}$ satisfying the algebraic equation~\eqref{sol of the branches}, represents a branch of the multivalued solution of the Burgers equation \eqref{Burgers equation} at a time $t$ beyond the shock time, and at a position $x\,.$ 
\end{Rq}
\begin{Rq}
    Observe that if we start from a rational initial data $u_0\,,$ then in view of identity \eqref{zd[u0]-multivalue solution-intro}\,, the weak zero--dispersion limit is also a rational function.  This outcome does not seem to be evident by solely examining the identity \eqref{explicit formula R zero dispersion-intro} obtained in Theorem~\ref{weak limit theorem zero dispersion}\,.
\end{Rq}

\begin{Rq}
By taking the modulus of \eqref{zd[u0]-multivalue solution-intro}\,, we deduce
\[
    \log\va{\zd(t,x)}=\sum_{k=0}^{2\ell} (-1)^k \log\va{u_0(y_k)}\,.
\]
This result should be compared to the one obtained by  \cite{Ge23} for the (BO)--equation, where he found : for all $t\in\R\,,$ for almost every $x\in\R\,,$ and under the same condition of smoothness on the initial data of Theorem~\ref{Theorem solution mutivalué- intro}, the zero--dispersion limit of (BO) is given as
\be\label{zero dispersion limit BO}
    ZD_{(\mathrm{BO})}[u_0](t,x)=\sum_{k=0}^{2\ell}(-1)^k u_0(y_k^{BO})\,,
\ee
where the $(y_k^{BO})_{0,\,\ldots\,2\ell}\,,$ $\ell=\ell(x)\in\Nzero\,,$ are real solutions for the algebraic equation 
\[
y+2tu_0(y)=x\,.
\]
\end{Rq}

A consequence of the previous Theorem, is the existence of a maximum Principle.
\begin{corollary}[A Maximum Principle]
    Let $u_0\in L^2_+(\R)\cap L^\infty(\R)$ (with $\|u_0\|_{L^2}<\sqrt{2 \pi}$ in the focusing case). For all $t\in\R\,,$
    \[
    \|\zd\|_{L^\infty}\leq \|u_0\|_{L^\infty}\,.
    \]
\end{corollary}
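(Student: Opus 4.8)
## Proof Proposal for the Maximum Principle

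The plan is to prove the $L^\infty$ bound first for initial data satisfying the regularity hypotheses of Theorem~\ref{Theorem solution mutivalué- intro}, using the explicit multivalued representation \eqref{zd[u0]-multivalue solution-intro}, and then to remove the regularity assumption by a density argument based on the stability statement of Theorem~\ref{weak limit theorem zero dispersion}. In the smooth case the argument is purely algebraic: take moduli in \eqref{zd[u0]-multivalue solution-intro}, observe that the values $|u_0(y_k)|$ at the branches form a monotone sequence because of the algebraic equation \eqref{sol of the branches}, and deduce that the alternating product is controlled by its largest factor.

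Concretely, assume first that $u_0\in\LtwoR$ is $C^1$, tends to $0$ at infinity with bounded derivative (and $\|u_0\|_{L^2}<\sqrt{2\pi}$ in the focusing case). Fix $t\neq0$; the case $t=0$ is trivial since $\zd(0,\cdot)=u_0$. For a.e. $x\in\R$, Theorem~\ref{Theorem solution mutivalué- intro} furnishes the simple real roots $y_0<\cdots<y_{2\ell}$ of \eqref{sol of the branches}, and taking the modulus in \eqref{zd[u0]-multivalue solution-intro} gives
\[
    |\zd(t,x)|=\prod_{k=0}^{2\ell}|u_0(y_k)|^{(-1)^k}.
\]
Writing $a_k:=|u_0(y_k)|$, each root satisfies $y_k=x\pm2t\,a_k^2$, so $y_k$ is a strictly monotone function of $a_k^2$; since the $y_k$ are ordered, the finite sequence $(a_k)_{0\le k\le2\ell}$ is strictly monotone — increasing when $\pm2t>0$, decreasing when $\pm2t<0$. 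If $(a_k)_k$ is increasing one groups $\prod_{k=0}^{2\ell}a_k^{(-1)^k}=a_{2\ell}\prod_{j=1}^{\ell}\frac{a_{2j-2}}{a_{2j-1}}\le a_{2\ell}$ since every factor is $\le1$; if $(a_k)_k$ is decreasing the symmetric grouping gives $\prod_{k}a_k^{(-1)^k}=a_0\prod_{j=1}^{\ell}\frac{a_{2j}}{a_{2j-1}}\le a_0$. In either case $|\zd(t,x)|\le\max_k|u_0(y_k)|\le\|u_0\|_{L^\infty}$ for a.e. $x$, hence $\|\zd(t)\|_{L^\infty}\le\|u_0\|_{L^\infty}$ under these regularity assumptions.

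For a general $u_0\in\LtwoR\cap L^\infty(\R)$, set $u_0^n:=u_0*\rho_{1/n}$ with $(\rho_{1/n})_n$ a standard smooth approximate identity. Then $u_0^n\in C^\infty$, and since $\widehat{u_0^n}=\widehat{u_0}\,\widehat{\rho_{1/n}}$ has support contained in that of $\widehat{u_0}$, hence in $[0,+\infty)$, one gets $u_0^n\in\LtwoR$; moreover $\|u_0^n\|_{L^\infty}\le\|u_0\|_{L^\infty}$ and $\|u_0^n\|_{L^2}\le\|u_0\|_{L^2}$ (so the smallness condition persists in the focusing case), while $(u_0^n)'=u_0*\rho_{1/n}'$ is bounded, making $u_0^n$ uniformly continuous and, being in $L^2$, vanishing at infinity. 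Thus each $u_0^n$ meets the hypotheses of Theorem~\ref{Theorem solution mutivalué- intro}, and the previous paragraph yields $\|ZD_{\pm}[u_0^n](t)\|_{L^\infty}\le\|u_0\|_{L^\infty}$ for every $n$ and every $t$. Since $u_0^n\to u_0$ strongly in $L^2$ with $\sup_n\|u_0^n\|_{L^\infty}<+\infty$, Theorem~\ref{weak limit theorem zero dispersion} gives $ZD_{\pm}[u_0^n](t)\rightharpoonup\zd(t)$ weakly in $L^2$; as the ball $\{v\in L^2(\R):\|v\|_{L^\infty}\le\|u_0\|_{L^\infty}\}$ is convex and strongly closed in $L^2$, hence weakly closed, the weak limit $\zd(t)$ belongs to it, which is the asserted bound.

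The algebraic heart — the monotonicity of $(|u_0(y_k)|)_k$ and the resulting telescoping estimate on the alternating product — is immediate once Theorem~\ref{Theorem solution mutivalué- intro} is available; I expect the only point requiring genuine care to be the reduction to smooth data, namely producing an approximating sequence that \emph{simultaneously} converges in $L^2$, remains in the Hardy space $\LtwoR$, does not increase the $L^2$ and $L^\infty$ norms, and is $C^1$ with bounded derivative and vanishing at infinity. Convolution with an approximate identity delivers all of these properties at once, which is why I would use it here.
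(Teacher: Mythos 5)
Your proposal is correct and follows essentially the same route as the paper: take the modulus in the multivalued formula, use the ordering of the roots $y_k$ of the algebraic equation to get monotonicity of $|u_0(y_k)|$ and bound the alternating product by $\max\{|u_0(y_0)|,|u_0(y_{2\ell})|\}$, then pass to general $u_0\in L^2_+\cap L^\infty$ by mollification together with the weak convergence statement of Theorem~\ref{weak limit theorem zero dispersion}. Your write-up is merely more explicit than the paper's on the telescoping step and on the weak closedness of the $L^\infty$ ball in $L^2$.
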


\vskip0.3cm
\noindent
\textbf{Related works.} \textit{Zero dispersion limit of the KdV equation.}
The problem of zero dispersion limit was first investigated by Lax and Levermore \cite{LL83} for the KdV equation on the real line 
\be\tag{KdV}
    \p_tu-3\p_x(u^2)+\eps^2\p^3_x u=0\,,\qquad u^\eps(0,x)=u_0(x)\,,
\ee
describing, thus, the weak zero dispersion limit for nonpositive initial data decaying sufficiently fast at infinity. In contrast with the Benjamin--Ono equation \cite{Ge23} and the Calogero--Moser DNLS equation, the zero dispersion limit for the KdV equation is expressed implicitly, as it is characterized by a quadratic minimum problem with constraints. Lax--Levermore's work initiated a series of papers. One can cite \cite{Ve87,Ve91, GK07, CG09,CG10a,CG10b}\,, where in all these works the inverse scattering theory, the spectral theory of the Lax operator and the associated Riemann--Hilbert problem are the main keys.

 \textit{Zero dispersion limit of the Benjamin--Ono equation.}
We have previously referenced the research by \cite{Ge23}, which characterized the zero-dispersion limit of the Benjamin-Ono equation as an alternative sum  \eqref{zero dispersion limit BO}. However, this formula traces back to the work of \cite{MX11,MW16} and \cite{Ga23a,Ga23b} who had already derived this sum \eqref{zero dispersion limit BO} for specific examples of initial data and by using scattering theory or the spectral theory.

\vskip0.3cm
\noindent
\textbf{Acknowledgments.} The author would like to thank her Ph.D. advisor Patrick G\'erard for proposing this research problem and providing valuable comments on this paper.

\section{The explicit formula of the zero dispersion limit of (CM)}

This section aims to establish the existence of a weak (in $L^2$) zero--dispersion limit solution to the Calogero--Moser DNLS equation \eqref{CM}\,. Additionally, it seeks to properly characterize this weak limit for all time $t$\,, through an explicit formula.  However, before proceeding, it is necessary to revisit some properties regarding the \eqref{CM} equation.

\vskip0.25cm
It is a completely integrable PDE in the following two senses : 
First, it possesses a Lax Pair structure $(L_u,B_u)$ that satisfies the Lax equation 
\[
    \frac{dL_u}{dt}=[B_u\,,L_u]\,,\qquad\quad [B_u\,,L_u]:= B_uL_u -L_uB_u\;
\]
for enough regular $u$ satisfying the \eqref{CM}--equation \cite{GL22}. 
The Lax operators for \eqref{CM} are given by 
\be\label{Lu Bu}
    L_u=D\mp T_uT_{\bar{u}}\,,
	\quad \hskip0.9cm
	B_u=\pm T_{u}T_{\partial_x\bar{u}}\mp 
 T_{\partial_x{u}}T_{\bar{u}} +i(T_{u}T_{\bar{u}})^2\,,
\ee
where $T_v$ is the Toeplitz operator of symbol $v$ defined as 
\be\label{Tv}
    T_vf=\Pi(vf)\,, \qquad f\in\LtwoR\,,
\ee
and $\Pi$ denotes the Szeg\H{o} projector~\eqref{Szego proj}\,. 
Second,  the complete integrability manifests through the finding of an explicit formula of the solution of the \eqref{CM}--equation \cite{KLV23}\,. To introduce this formula, specific notation needs to be presented.
Thus, we consider on $L^2_+(\R)\,,$  the contraction semigroup 
\[
S(\eta)h(x)=\Pi(\eee^{ix\eta} h(x))\,, \qquad \eta>0\,.
\]
And we denote by $X$ its infinitesimal generator
\[
Xh(x)\,=\,-i\frac{d}{d\eta}_{\Big|{\eta=0}}(S(\eta)h(x))\,=\,xh(x)\,,
\]
of domain
\begin{align*}
    \operatorname{Dom}(X)
    =&\,\{h\in \LtwoR\,;\,xh \in L^2(\R)\}
    \\
    =&\,\{h\in \LtwoR\,;\,\fr{h} \in H^1\big([0,+\infty)\big)\,,\, \fr{h}(0)=0\}\,.
\end{align*}
Its adjoint $X^*$  has the following domain
\begin{align*}
    \operatorname{Dom}(X^*)
    =&\,\{f\in \LtwoR\,;\,\e \, c>0\,,\, \forall h \in \operatorname{Dom}(X) \,,\,\va{\ps{f}{Xh}}\leq c\,\|h\|_{L^2}\}
    \\
    =&\,\{f\in \LtwoR\,;\,\fr{f}\,|_{(0,+\infty)} \in H^1\big((0,+\infty)\big)\}\,,
\end{align*}
and is defined for all $\xi>0$ as 
\[
    \fr{X^*f}(\xi)=i\p_{\xi}\fr{f}(\xi)\,.
\]
That is, for all $f\in \operatorname{Dom}(X^*)\,,$ 
\begin{equation}\label{X*}
    X^*f(x)=xf+\frac{1}{2\pi i}
    \fr{f}(0^+)\,.
\end{equation}
The following theorem aims to recall the explicit formula of \eqref{CM} defined for any $u_0\in L^2_+(\R)$ \cite{KLV23}\,, and to extend the global well--posedness result in $H^k_+(\R):=H^k(\R)\cap L^2_+(\R)\,,$ $k\in\N_{\geq 1}$ obtained by \cite{GL22}\,, to $\LtwoR\,$ \cite{KLV23}\,.
\begin{theorem}[\cite{KLV23}] \label{GWP CM in L2 + Formule explicit}
    Let $u_0\in \LtwoR$ (such that $\|u_0\|_{L^2}<\sqrt{2\pi}$ in the focusing case). Then there exists a unique global solution $u\in \mathcal{C}_t L^2_+(\R)$ such that for any $(u_n^0)\subseteq H^\infty_+(\R)\,,$ $(xu_n^0)\subseteq L^2$\,, $u_n^0\to u_0$ in $L^2$\,, we have for all $T>0\,,$
    \[
    u_n \to u \qquad \text{in }\ \mathcal{C}_t L^2_+([-T,T]\,,\R)\,.
    \]
    Additionally, for all $z\in\C_+:=\lracc{z\in\C\,,\, \operatorname{Im}(z)>0}\,,$  
\begin{equation}\label{explicit formula CM}
    u(t,z)=\frac{1}{2\pi i}I_+((X^*+2tL_{u_0}-z)^{-1}u_0)\,,
\end{equation}
where $I_+$ denotes
\be\label{I+}
    I_+(f):=\fr{f}(0^+)\,, \qquad \qlq f\in \operatorname{Dom}(X^*)\,.
\ee
As a consequence, $\|u(t)\|_{L^2}=\|u_0\|_{L^2}\,.$
\end{theorem}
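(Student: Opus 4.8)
The plan is to prove the explicit formula~\eqref{explicit formula CM} first for smooth, well-localized data $u_0\in H^\infty_+(\R)$ with $xu_0\in L^2$ --- for which the global $H^\infty$ solution and the Lax structure \eqref{Lu Bu} are already provided by \cite{GL22} --- and then to pass to the limit for general $u_0\in\LtwoR$ by density. The first step is to repackage the flow into a single skew-adjoint generator. Since $L_u$ is self-adjoint, $-iL_u^2$ is skew-adjoint, and $B_u$ is skew-adjoint as well, so $\widetilde B_u:=B_u-iL_u^2$ is skew-adjoint; because $[L_u^2,L_u]=0$ it still satisfies the Lax equation $\tfrac{d}{dt}L_u=[\widetilde B_u,L_u]$. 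Expanding $L_u^2=D^2\mp(DT_uT_{\bar u}+T_uT_{\bar u}D)+(T_uT_{\bar u})^2$, one sees that the quartic terms $\pm i(T_uT_{\bar u})^2$ of $B_u$ and $-iL_u^2$ cancel and that the surviving terms reproduce the right-hand side of~\eqref{CM}; that is, a smooth solution obeys $\partial_t u=\widetilde B_u u$. Letting $\widetilde U(t)$ be the unitary propagator of $\partial_t\widetilde U=\widetilde B_{u(t)}\widetilde U$, $\widetilde U(0)=\Id$, I would then read off $u(t)=\widetilde U(t)u_0$ and, integrating the Lax equation, $L_{u(t)}=\widetilde U(t)L_{u_0}\widetilde U(t)^*$. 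In particular $\|u(t)\|_{L^2}=\|u_0\|_{L^2}$, the asserted mass conservation.

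The heart of the argument is the commutator identity
\be\label{eq:comm-sketch}
    [X^*,\widetilde B_u]=2L_u\,.
\ee
Granting it, differentiating $t\mapsto\widetilde U(t)^*X^*\widetilde U(t)$ and using $\widetilde U(t)^*L_{u(t)}\widetilde U(t)=L_{u_0}$ gives $\tfrac{d}{dt}(\widetilde U^*X^*\widetilde U)=\widetilde U^*[X^*,\widetilde B_u]\widetilde U=2L_{u_0}$, a constant, so that integration yields
\be\label{eq:conj-sketch}
    \widetilde U(t)^*X^*\widetilde U(t)=X^*+2tL_{u_0}\,.
\ee
As a consistency check, the free case $u\equiv0$ gives $\widetilde B_0=-iD^2$ and, using $[X^*,D]=i\,\Id$, one has $[X^*,-iD^2]=-i[X^*,D^2]=2D=2L_0$, which is exactly~\eqref{eq:comm-sketch}. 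The full identity should follow by adding the Toeplitz contributions. Verifying~\eqref{eq:comm-sketch} on a dense domain such as $\operatorname{Dom}(X^*)\cap H^\infty_+(\R)$ with decay is the step I expect to be the main obstacle: one must compute $[X^*,T_uT_{\bar u}]$ and the mixed first-order terms carefully, and in particular control the boundary contribution $\tfrac{1}{2\pi i}\fr{f}(0^+)$ appearing in the definition~\eqref{X*} of $X^*$, checking that all such boundary terms cancel.

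To recover $u(t,z)$ I would use two facts. First, the Cauchy reproducing identity: for every $h\in\LtwoR$ and $z\in\C_+$,
\be\label{eq:repro-sketch}
    h(z)=\frac{1}{2\pi i}\,I_+\big((X^*-z)^{-1}h\big)\,,
\ee
obtained by solving $(X^*-z)g=h$ in Fourier (where $X^*=i\p_\xi$ on $(0,+\infty)$), keeping the $L^2$-decaying solution, and reading off $\fr{g}(0^+)$. Second, $I_+$ is invariant under the flow: every summand of $L_u$, hence of $\widetilde B_u$, has as its leftmost factor either $D$ or a Toeplitz operator with symbol in $\LtwoR$ (namely $u$ or $\p_x u$); since $I_+(Dg)=0$ and $I_+(T_w g)=\fr{wg}(0^+)=\int_\R w\,\Pi(\cdot)\,\diff x=0$ whenever $w\in\LtwoR$ --- because $\int_\R fg\,\diff x=0$ for any $f,g\in\LtwoR$ (the Fourier supports overlap only at $\xi=0$) --- one gets $I_+(\widetilde B_u f)=0$, hence $I_+\circ\widetilde U(t)=I_+$. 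Combining~\eqref{eq:repro-sketch}, $u(t)=\widetilde U(t)u_0$, and~\eqref{eq:conj-sketch},
\begin{align*}
    u(t,z)&=\frac{1}{2\pi i}I_+\big((X^*-z)^{-1}\widetilde U(t)u_0\big)
    =\frac{1}{2\pi i}I_+\big(\widetilde U(t)\,(X^*+2tL_{u_0}-z)^{-1}u_0\big)\\
    &=\frac{1}{2\pi i}I_+\big((X^*+2tL_{u_0}-z)^{-1}u_0\big)\,,
\end{align*}
which is~\eqref{explicit formula CM}.

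Finally, to extend everything to arbitrary $u_0\in\LtwoR$ (with $\|u_0\|_{L^2}<\sqrt{2\pi}$ in the focusing case), I would approximate by $u_0^n\in H^\infty_+(\R)$ with $xu_0^n\in L^2$ and $u_0^n\to u_0$ in $L^2$, and show that the right-hand side of~\eqref{explicit formula CM} depends continuously on $u_0$ in $L^2$, uniformly for $t\in[-T,T]$ and locally uniformly in $z\in\C_+$. This reduces to uniform bounds on the resolvent $(X^*+2tL_{u_0}-z)^{-1}$ and its Lipschitz dependence on $u_0$, which is the secondary difficulty since $X^*+2tL_{u_0}$ is not self-adjoint and one cannot simply invoke the spectral theorem. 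Together with the mass bound, this produces a Cauchy sequence in $\mathcal{C}_tL^2_+([-T,T])$ whose limit $u$ is independent of the approximating sequence, is the unique global solution, satisfies~\eqref{explicit formula CM}, and conserves the $L^2$ norm, completing the proof.
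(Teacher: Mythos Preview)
The paper does not contain a proof of this theorem at all: it is stated as a citation of \cite{KLV23} (with the torus analogue attributed to \cite{Ba23a}), followed only by a remark pointing to earlier explicit-formula results \cite{GG15,Ge22,GP23}. There is therefore no ``paper's own proof'' to compare against.

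That said, your sketch is precisely the strategy used in those references. The ingredients you list --- replacing $B_u$ by the skew-adjoint $\widetilde B_u=B_u-iL_u^2$ so that $\partial_t u=\widetilde B_u u$, the commutator identity $[X^*,\widetilde B_u]=2L_u$, the invariance $I_+\circ\widetilde U(t)=I_+$, and the Cauchy reproducing formula $h(z)=\tfrac{1}{2\pi i}I_+((X^*-z)^{-1}h)$ --- are exactly the four pillars of the argument in \cite{Ba23a} and \cite{KLV23}. You have also correctly flagged the two places that require real work: the commutator computation (where the boundary term in $X^*$ must be tracked through all the Toeplitz pieces) and, for the extension to $L^2_+$, the resolvent bounds for the non-self-adjoint operator $X^*+2tL_{u_0}$. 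One small caution on the invariance step: your justification that $I_+(T_wg)=0$ for $w\in L^2_+(\R)$ uses $\int_\R fg=0$ for $f,g\in L^2_+$, but $T_w g$ need not lie in $\operatorname{Dom}(X^*)$ a priori, so strictly speaking one argues instead via $\tfrac{d}{dt}I_+(\widetilde U(t)f)=I_+(\widetilde B_u\widetilde U(t)f)$ on a suitable dense set and checks the vanishing term by term; the conclusion is the same.
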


\begin{Rq}
    The explicit formula on $\Ltwo$ was earlier derived in \cite{Ba23a}[Proposition 3.4]. This is not the first instance of discovering an explicit formula for a completely integrable PDE; for previous examples, we refer to  \cite{GG15,Ge22,GP23}\,. 
\end{Rq}

\vskip0.25cm
In what follows, we consider the rescaled version of the  \eqref{CM}--equation with initial data $u_0\in L^2_+(\R)\cap L^\infty(\R)$ : For all $\eps>0\,,$
\be\label{CM-eps}\tag{CM--$\eps$}
    \begin{cases}
        i\p_t\ueps\,+\eps\,\p_x^2 \ueps \,\pm\,2D\Pi(|\ueps|^2)\ueps=0\,, 
        \\
        u^\eps|_{t=0}=u_0\,.
    \end{cases} 
\ee
Our primary focus is on establishing the existence of a weak zero dispersion limit for \eqref{CM}; that is, determining whether the \eqref{CM-eps} equation has a weak limit in $L^2$ as $\eps$ tends to 0. The following theorem addresses this question.
 We recall that the considered initial data $u_0$ is independent of the parameter $\eps$\,, and that $\zdp$ represents the weak zero-dispersion limit solution in the focusing case, and $\zdm$ corresponds to the one in the defocusing case.

\begin{theorem*}\ref{weak limit theorem zero dispersion}.
    Given  an initial data $u_0\in L^2_+(\R)\cap L^\infty(\R)\,,$  (with $\|u_0\|_{L^2}< \sqrt{2\pi}$ in the focusing case), the weak (in $L^2$--space) zero-dispersion limit solution $\zd$ of \eqref{CM} exists and is characterized via the following explicit formula
    \be\label{explicit formula R zero dispersion}
        \zd(t,z)=\Big(\operatorname{Id}\,\mp\,2tT_{u_0}T_{\bar{u}_0}(X^*-z)^{-1}\Big)^{-1}u_0(z)\,, \qquad t\in\R \,,\, z\in\C_+\,.
    \ee
    In addition, we have
    \be\label{zd can be extended to the Hardy space}
    \|\zd(t)\|_{L^2}\leq \|u_0\|_{L^2}\,.
    \ee
    Furthermore,
    if $u_0^n\to u_0$ strongly in $L^2$ as $n\to \infty$ with $\sup_n\|u_0^n\|_{L^\infty}<+\infty$\,,  then for all $T>0$,
    \be\label{cv faible du flot}
        \sup_{t\in[-T,T]}\va{ZD_{\pm}[u_0^n](t)- \zd(t)}\rightharpoonup 0 \ \text{ in } L^2(\R)\,.
    \ee
\end{theorem*}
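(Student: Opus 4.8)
The plan is to use the explicit formula \eqref{explicit formula CM} for the rescaled equation \eqref{CM-eps} and pass to the limit $\eps\to0$. First I would note that the rescaling replaces $L_{u_0}=D\mp T_{u_0}T_{\bar u_0}$ by $\eps D\mp T_{u_0}T_{\bar u_0}$, so applying Theorem~\ref{GWP CM in L2 + Formule explicit} to \eqref{CM-eps} gives, for $z\in\C_+$,
\[
    u^\eps(t,z)=\frac1{2\pi i}I_+\Big(\big(X^*+2t(\eps D\mp T_{u_0}T_{\bar u_0})-z\big)^{-1}u_0\Big)\,.
\]
The key algebraic observation is that $X^*+2t\eps D - z$ and $X^*-z$ differ by the bounded-away-from-invertibility shift $2t\eps D$, and crucially $\fr{(X^*+2t\eps D-z)^{-1}u_0}$ can be computed by solving the transport-type ODE $i\p_\xi g + 2t\eps\,\xi\, g - z g = -f$ on $(0,\infty)$ in Fourier; the $\eps D$ term only contributes a phase $\eee^{-it\eps\xi^2}$-type factor that converges (locally uniformly, and after extracting the $I_+$ value at $\xi=0^+$) to the corresponding quantity with $\eps=0$. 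Factoring $X^*+2t\eps D-z$ out of the resolvent and writing
\[
    \big(X^*+2t\eps D\mp 2tT_{u_0}T_{\bar u_0}-z\big)^{-1}
    =\big(\Id \mp 2t(X^*+2t\eps D-z)^{-1}T_{u_0}T_{\bar u_0}\big)^{-1}(X^*+2t\eps D-z)^{-1}
\]
should, after sending $\eps\to0$, produce exactly \eqref{explicit formula R zero dispersion}; here one uses that $T_{u_0}T_{\bar u_0}$ is bounded on $L^2_+(\R)$ (since $u_0\in L^\infty$) with norm $\le\|u_0\|_{L^\infty}^2$, so the Neumann-type series for the inverse converges for small $t$ and then one propagates by the group property / analytic continuation in $t$, exactly as the invertibility of $\Id\mp 2tT_{u_0}T_{\bar u_0}(X^*-z)^{-1}$ for $z\in\C_+$ is guaranteed by $\operatorname{Im}((X^*-z)^{-1})$ having a sign.

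Next, for the weak convergence itself I would test $u^\eps(t)$ against a fixed $h\in L^2_+(\R)$ (or against reproducing kernels $\tfrac1{2\pi i}(z-\cdot)^{-1}$, $z\in\C_+$, whose span is dense), reducing the statement to pointwise-in-$z$ convergence of the holomorphic functions $u^\eps(t,z)\to\zd(t,z)$ together with a uniform $L^2$ bound. The uniform bound is immediate: Theorem~\ref{GWP CM in L2 + Formule explicit} gives $\|u^\eps(t)\|_{L^2}=\|u_0\|_{L^2}$ for every $\eps$, which also yields $\|\zd(t)\|_{L^2}\le\|u_0\|_{L^2}$ by weak lower semicontinuity of the norm — this is exactly \eqref{zd can be extended to the Hardy space}. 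The pointwise convergence $u^\eps(t,z)\to\zd(t,z)$ follows from the resolvent computation above by dominated convergence in the $\xi$-integral defining $I_+$, using that the oscillatory factor $\eee^{i\,(\text{phase involving }\eps)}$ is bounded by $1$ and converges pointwise.

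For the last assertion \eqref{cv faible du flot}, the map $u_0\mapsto\zd[u_0](t,z)$ is, for fixed $t,z$, a continuous (indeed locally Lipschitz on balls of $L^\infty$) function of $u_0\in L^2_+$: this is read off from \eqref{explicit formula R zero dispersion} since $u_0\mapsto T_{u_0}T_{\bar u_0}$ is continuous from $L^2\cap L^\infty$ (with $L^\infty$ bounded) into bounded operators, and inversion of $\Id\mp2tT_{u_0}T_{\bar u_0}(X^*-z)^{-1}$ is continuous where it is defined, uniformly on the relevant range of $t\in[-T,T]$ and on balls $\{\|u_0\|_{L^\infty}\le M\}$. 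Hence $ZD_\pm[u_0^n](t,z)\to\zd(t,z)$ pointwise in $z$, and combined with the uniform bound $\|ZD_\pm[u_0^n](t)\|_{L^2}\le\|u_0^n\|_{L^2}\le C$ (from \eqref{zd can be extended to the Hardy space} applied to each $u_0^n$, using $\sup_n\|u_0^n\|_{L^\infty}<\infty$ only to stay in a fixed ball for the continuity step) this gives the claimed weak convergence in $L^2$, uniformly for $t\in[-T,T]$ after checking equicontinuity in $t$ of the (weakly convergent) family, which again comes from the explicit formula.

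\textbf{Main obstacle.} The delicate point is justifying the interchange of limits in the resolvent: showing that $(X^*+2t\eps D-z)^{-1}\to(X^*-z)^{-1}$ in a sense strong enough to pass to the limit inside $I_+(\cdot)$ and inside the operator inversion, uniformly on compact sets of $(t,z)$. The $\eps D$ perturbation is unbounded relative to $X^*$, so one cannot simply invoke norm-resolvent continuity; instead one must exploit the explicit Fourier representation (the transport ODE on $(0,\infty)$ with the boundary condition at $0$) to control the oscillatory factor $\eee^{-it\eps\xi^2}$ and extract the boundary value cleanly — this is where the bulk of the technical work lies, and it is also where the hypothesis $u_0\in L^\infty$ (boundedness of $T_{u_0}T_{\bar u_0}$) is essential to close the argument.
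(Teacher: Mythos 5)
Your overall strategy is the paper's: rescale to get the explicit formula $u^\eps(t,z)=\frac{1}{2\pi i}I_+\big((X^*+2t\eps D\mp 2tT_{u_0}T_{\bar u_0}-z)^{-1}u_0\big)$, factor the resolvent, pass to the limit $\eps\to0$, and get the uniform bound and weak compactness from the conservation law $\|u^\eps(t)\|_{L^2}=\|u_0\|_{L^2}$. The bound \eqref{zd can be extended to the Hardy space} and the reduction of weak convergence to pointwise convergence of the holomorphic extensions are handled exactly as in the paper.

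The genuine gap is that you leave the one nontrivial step --- removing the unbounded perturbation $2t\eps D$ --- as an acknowledged ``main obstacle,'' and the route you propose (a direct resolvent-convergence argument for $(X^*+2t\eps D-z)^{-1}\to(X^*-z)^{-1}$, justified by dominated convergence in the $\xi$-integral) is both harder than necessary and not actually carried out. The integrating factor you mention for the transport ODE is precisely the exact unitary conjugation $X^*+2t\eps D=\mathrm{e}^{i\eps tD^2}X^*\mathrm{e}^{-i\eps tD^2}$, and once this is written down no resolvent perturbation theory is needed: one gets
\[
u^\eps(t,z)=\tfrac{1}{2\pi i}I_+\Big(\mathrm{e}^{i\eps tD^2}\big(X^*\mp2t\,\mathrm{e}^{-i\eps tD^2}T_{u_0}T_{\bar u_0}\mathrm{e}^{i\eps tD^2}-z\big)^{-1}\mathrm{e}^{-i\eps tD^2}u_0\Big),
\]
the outer factor $\mathrm{e}^{i\eps tD^2}$ is annihilated by $I_+$ because its Fourier multiplier equals $1$ at $\xi=0^+$, and the remaining conjugated operator $\mathrm{e}^{-i\eps tD^2}T_{u_0}T_{\bar u_0}\mathrm{e}^{i\eps tD^2}$ is uniformly bounded (here is where $u_0\in L^\infty$ enters) and converges strongly to $T_{u_0}T_{\bar u_0}$, since $\mathrm{e}^{\pm i\eps tD^2}\to\operatorname{Id}$ strongly. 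This turns your ``obstacle'' into a two-line verification. A secondary point: your factorization puts the resolvent factor on the right, $\big(\operatorname{Id}\mp2t(X^*+2t\eps D-z)^{-1}T_{u_0}T_{\bar u_0}\big)^{-1}(X^*+2t\eps D-z)^{-1}$, which does not let you apply the identity $I_+\big((X^*-z)^{-1}f\big)=2\pi i\,f(z)$ directly; you need the equivalent form $(X^*-z)^{-1}\big(\operatorname{Id}\mp2tT_{u_0}T_{\bar u_0}(X^*-z)^{-1}\big)^{-1}$ with the resolvent on the left before peeling off $I_+$ to reach \eqref{explicit formula R zero dispersion}. Your sketch of \eqref{cv faible du flot} via continuity of the explicit formula in $u_0$ on $L^\infty$-balls is reasonable (the paper does not detail this step either).
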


\begin{proof}
    In view of  Theorem~\ref{GWP CM in L2 + Formule explicit}\, we have for all $\eps>0\,,$ $$\|u^\eps(t)\|_{L^2}=\|u_0\|_{L^2}\,,$$
    where $u^\eps(t)$ is the solution of \eqref{CM-eps}\,.
    Hence, by Banach’s theorem, we deduce that there exists $\zd\in\LtwoR$, such that, up to a sequence, $u^\eps(t)\rightharpoonup ZD[u_0](t)$ in $L^2$ as $\eps\to 0\,,$ and 
    \[
        \|\zd(t)\|_{L^2}\leq \liminf_{\eps\to0}\|u^\eps(t)\|_{L^2}=\|u_0\|_{L^2}\,.
    \]
    To characterize $\zd$\,, we will use the explicit formula of Theorem~\ref{GWP CM in L2 + Formule explicit}\,.
    However, first observe that when $u$ is a solution of \eqref{CM} for an initial data $u_0$\,, then  
$
    \sqrt{\eps}u(\eps t, \cdot)\equiv \sqrt{\eps} \mathcal{S}(\eps t)[u_0]
$ is a solution to \eqref{CM-eps} for an initial data $\sqrt{\eps} u_0\,,$ where $\mathcal{S}(t)$ denotes the flow of \eqref{CM}\,. 
That is $$\ueps(t,z):=\sqrt{\eps}\mathcal{S}(\eps t) \left[\frac{u_0}{\sqrt{\eps}}\right]$$ is a solution to \eqref{CM-eps} for an initial data $u_0\,.$
Therefore, starting from an initial data $u_0\,,$ one deduces by \eqref{explicit formula CM} and \eqref{Lu Bu} that the solution of \eqref{CM-eps} in the focusing and defocusing case is explicitly given, for all $\eps>0\,,$ by
\be\label{explicit formula rescaled eps}
    \ueps(t,z):=\frac{1}{2\pi i}I_+((X^*+2t\eps D\mp 2tT_{u_0}T_{\overline{u_0}}-z)^{-1}u_0)\,, \qquad z\in\C_+\,.
\ee
The next step is to pass to the limit $\eps\to 0\,$ in the above formula. For this purpose, we rewrite \eqref{explicit formula rescaled eps} as follows 
\[
    \ueps(t,z)=\frac{1}{2\pi i}\Big(\operatorname{Id}\,\mp\, 2t\mathrm{e}^{-i \eps t D^2}T_{u_0}T_{\bar{u}_0}\mathrm{e}^{i \eps t D^2}(X^*-z)^{-1}
    \Big)^{-1}u_0\,.
\]
Indeed, by using the Fourier transform, for all $\xi>0\,,$
\[
    \fr{(X^*+2t\eps D)f}(\xi)=\eee^{it\xi^2}i\p_\xi (\eee^{-it\xi^2} \fr{f} (\xi))\,,
\]
 \eqref{explicit formula rescaled eps} becomes
\begin{align*}
    \ueps(t,z)
    =&\,\frac{1}{2\pi i}I_+\Big((\eee^{i\eps t D^2}X^*\eee^{-i\eps t D^2}\mp 2tT_{u_0}T_{\overline{u_0}}-z)^{-1}u_0\Big)
\\
    =&\,\frac{1}{2\pi i}I_+\Big(\eee^{i\eps t D^2}(X^*\,\mp\, 2t\eee^{-i\eps t D^2}T_{u_0}T_{\overline{u_0}}\eee^{i\eps t D^2}-z)^{-1}\eee^{-i\eps t D^2}u_0\Big)
\end{align*}
Thus, by definition of $I_+$ in \eqref{I+}\,, we deduce
\begin{align*}
    \ueps(t,z)
    =\,&\frac{1}{2\pi i}I_+\Big((X^*\,\mp\, 2t\eee^{-i\eps t D^2}T_{u_0}T_{\overline{u_0}}\eee^{i\eps t D^2}-z)^{-1}\eee^{-i\eps t D^2}u_0\Big)
    \\
    =\,&\frac{1}{2\pi i}I_+\Big(\big(X^*-z\big)^{-1}\cdot\big(\Id\,\mp\, 2t\eee^{-i\eps t D^2}T_{u_0}T_{\overline{u_0}}\eee^{i\eps t D^2}(X^*-z)^{-1}\big)^{-1}\eee^{-i\eps t D^2}u_0\Big)
\end{align*}
Now, using the fact that 
\begin{align}\label{remove I+}
    I_{+}\left(\left(X^*-z\right)^{-1} f\right) & 
    =\, \lim _{\varepsilon \rightarrow 0}\left\langle\left(X^*-z\right)^{-1} f\,,\, \frac{1}{1-i\eps x}\right\rangle
    =\lim _{\varepsilon \rightarrow 0}\left\langle f\,,\,(X-\bar{z})^{-1} \left(\frac{1}{1-i\eps x}\right)\right\rangle\notag 
    \\
    & =\lim _{\varepsilon \rightarrow 0}\left\langle f, (x-z)^{-1}\left(\frac{1}{1-i\eps x}\right)\right\rangle=2 \pi i f(z) ,
\end{align}
we infer,
\be\label{u-eps sans I+}
     \ueps(t,z)
    =\Big(\Id\,\mp\, 2t\eee^{-i\eps t D^2}T_{u_0}T_{\overline{u_0}}\eee^{i\eps t D^2}(X^*-z)^{-1}\Big)^{-1}\eee^{-i\eps t D^2}u_0(z)\,.
\ee
Observing first that $\|\eee^{-i\eps t D^2}u_0\|_{L^2}= \|u_0\|_{L^2}\,,$ and second,   $\eee^{-i\eps t D^2}T_{u_0}T_{\overline{u_0}}\eee^{i\eps t D^2}(X^*-z)^{-1}$ is a bounded operator as $u_0\in L^\infty\,,$ we infer by passing to the limit $\eps\to 0$ in \eqref{u-eps sans I+}\,,
\[
    \zd(t,z):= \Big(\Id\,\mp\, 2tT_{u_0}T_{\overline{u_0}}(X^*-z)^{-1}\Big)^{-1}u_0(z)\,.
\]
\end{proof}

  \vskip0.5cm
\section{Link with the multivalued solution of the Burgers equation }
\vskip0.25cm
The aim of this section is to prove Theorem~\ref{Theorem solution mutivalué- intro}\,, which describes the weak zero dispersion limit solution of \eqref{CM} starting from an initial data $u_0\in L^2_+(\R)\cap \mathcal{C}^1$  tending to $0$ at infinity and satisfying $u_0'\in L^\infty(\R)$, in terms of the branches of the multivalued solution for the Burgers equation \eqref{Burgers equation}\,.  However, before proving this theorem for such initial data~$u_0$\,, we will first focus on proving it for rational initial data in the Hardy space
\be\label{u0 frac rationnel}
    u_0(y)=\frac{P(y)}{Q(y)}\,, \qquad Q(y):=(y+\overline{p}_0)\ldots(y+\overline{p}_{N-1})\,,\qquad p_k\neq p_j\,,\ k\neq j\,.
\ee
where $p_k\in\C$\,, $\operatorname{Im}(p_k)<0$ for all $k=0,\cdots\,, N-1$\,, and $P(y)=\sum_{n=0}^{N-1}a_ny^n$\,, $a_n\in\C\,$. 

\begin{prop}\label{Zd-solution multivalue prop}
    Let $u_0$ be a rational function defined in \eqref{u0 frac rationnel}\,. Then for every time $t\in\R\,,$ 
    and for almost every $x\in\R\,,$ the algebraic equation
    \be\label{eqt alg prop}
        y\mp2t\va{u_0(y)}^2=x
    \ee
    has an odd number of simple real solutions
    $
        y_0:=y_0(t,x)\,<\, \ldots\,<\, y_{2\ell}:=y_{2\ell}(t,x)\,,
    $
    and the zero--disperion limit of \eqref{CM} is given,
    for almost every $x\in \R$\,, 
    by
    \be \label{ZD sol multivalue for u0 rational function}
        \zd (t,x)=\mathrm{e}^{i \varphi(t,x)}\left(\mp i\, \frac{\va{t}}{t}\right)^\ell \prod_{k=0}^{2\ell}\va{u_0(y_k)}^{(-1)^k} 
\ee
where 
\[
    \varphi(t,x)=\arg(u_0(x))+\frac{1}{2\pi}\int_0^{+\infty}
    \frac{1}{s}\log\left(\frac{\ s\mp 2t \va{u_0(x+s)}^2}{-s\mp 2t \va{u_0(x-s)}^2}\frac{\prod_{k=0}^{2\ell}(x-s-y_k)}{\prod_{k=0}^{2\ell}(x+s-y_k)}\right)\mathrm{d}s\,.
\]
\end{prop}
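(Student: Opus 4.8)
The plan is to compute $\zd(t,\cdot)$ from \eqref{explicit formula R zero dispersion} and to show that, for a rational symbol $u_0=P/Q$, it is again rational and equal to \eqref{ZD sol multivalue for u0 rational function}. Using the resolvent identity $\big(\Id\mp2tT_{u_0}T_{\bar u_0}(X^*-z)^{-1}\big)^{-1}u_0=(X^*-z)(X^*\mp2tT_{u_0}T_{\bar u_0}-z)^{-1}u_0$ together with the formula \eqref{X*} for $X^*$, one gets $\zd(t,z)=\tfrac1{2\pi i}\widehat h(0^+)$ where $h:=(X^*\mp2tT_{u_0}T_{\bar u_0}-z)^{-1}u_0$. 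The first — and technically central — task is to solve this equation explicitly for rational $u_0$; as indicated in the remark following Theorem~\ref{Theorem solution mutivalué- intro}, the rationality of $\zd$ is not apparent from \eqref{explicit formula R zero dispersion}, and I would recover it by passing to the Fourier variable $\xi>0$, where $X^*$ acts as $i\partial_\xi$, the Toeplitz operators act as exponential convolutions, and $\widehat{u_0}(\xi)=\sum_k\gamma_k e^{i\bar p_k\xi}$: this forces the relevant solution $\widehat h$ into a finite-dimensional space of exponential polynomials $\operatorname{span}\{\xi^j e^{i\bar p_k\xi}\}$, so that the computation reduces to finite linear algebra (Cramer's rule) and $\zd(t,\cdot)$ comes out as a rational function holomorphic on $\C_+$ with poles in $\C_-$. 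Throughout, $\overline P,\overline Q$ denote the conjugate-coefficient polynomials, so that $|u_0(y)|^2=P(y)\overline P(y)/(Q(y)\overline Q(y))$ for $y\in\R$.

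Next I would identify the polynomial that organizes the answer. Multiplying \eqref{eqt alg prop} by $Q(y)\overline Q(y)$ turns it into $R_{t,x}(y):=(y-x)Q(y)\overline Q(y)\mp2tP(y)\overline P(y)=0$, of odd degree $2N+1$ and positive leading coefficient. Since $y\mp2t|u_0(y)|^2\to\pm\infty$ as $y\to\pm\infty$ ($u_0$ decays), $R_{t,x}$ has an odd number of real roots counted with multiplicity; by Sard's theorem applied to $y\mapsto y\mp2t|u_0(y)|^2$, for a.e.\ $x\in\R$ these roots $y_0<\dots<y_{2\ell}$ are simple, with $\operatorname{sign}R_{t,x}'(y_k)=(-1)^k$, while the remaining $2(N-\ell)$ roots split into conjugate pairs $w_j,\bar w_j$; hence $R_{t,x}(y)=\prod_{k=0}^{2\ell}(y-y_k)\prod_j(y-w_j)(y-\bar w_j)$. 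The heart of the proof — and the step I expect to be the main obstacle — is to reconcile this factorization with the rational function obtained above: to show that the poles of $\zd(t,\cdot)$ in $\C_-$ are carried by $\overline Q$ and by the $w_j$ lying in $\C_-$, while the Cramer numerator records exactly the real roots $y_k$. This requires matching the concrete structure of the determinant to the combinatorics of the roots of $R_{t,x}$.

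Granting that matching, the modulus follows by direct manipulation: $|\zd(t,x)|^2=\zd(t,x)\overline{\zd(t,x)}$ is a ratio of polynomials in $x$ which, using $R_{t,x}(y_k)=0$, the factorization above, and residue bookkeeping, collapses to $\prod_{k=0}^{2\ell}|u_0(y_k)|^{2(-1)^k}$; the alternation of exponents is forced by $\operatorname{sign}R_{t,x}'(y_k)=(-1)^k$, which distinguishes the even- from the odd-indexed roots. The unimodular constant $(\mp i\,|t|/t)^\ell$ then emerges from the $\ell$ square-root branches $|R_{t,x}'(y_k)|^{-1/2}$ (equivalently $\partial_x y_k=1/R_{t,x}'(y_k)$) attached to the $\ell$ odd-indexed roots when one passes from the determinant/residue form of $\zd$ to its modulus--argument form, the factor $|t|/t$ recording the linear dependence of $R_{t,x}$ on $t$.

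Finally, for the phase I would invoke Hardy-space theory: being rational and in $L^2_+(\R)$, $\zd(t,\cdot)$ factors as a unimodular constant times a finite Blaschke product times an outer function, so its argument on $\R$ is recovered from $\log|\zd(t,\cdot)|$ by a principal-value integral, up to the contributions of that constant and of the Blaschke zeros — precisely the factors already isolated. Combining the elementary identity $\tfrac1{2\pi}\int_0^{+\infty}\tfrac1s\big(g(x+s)-g(x-s)\big)\,\mathrm{d}s=\tfrac1{2\pi}\,\mathrm{p.v.}\!\int_\R\tfrac{g(y)}{y-x}\,\mathrm{d}y$ with the relation $(y-x)\mp2t|u_0(y)|^2=R_{t,x}(y)/(Q(y)\overline Q(y))$, one rewrites the integral defining $\varphi(t,x)$ as $\arg u_0(x)$ plus the Hilbert transform of $\log\big(\prod_j(y-w_j)(y-\bar w_j)\big/\prod_k(y+\bar p_k)(y+p_k)\big)$, which is exactly the outer phase compatible with the modulus computed above. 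Assembling modulus, Blaschke/constant factor and phase yields \eqref{ZD sol multivalue for u0 rational function}. Throughout, the limit $z\to x$ from $\C_+$ and the principal-value integral are taken for $x$ off the Lebesgue-null ``caustic'' set where $R_{t,x}$ has a multiple real root.
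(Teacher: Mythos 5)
Your proposal has the right architecture --- it starts from the explicit formula \eqref{explicit formula R zero dispersion}, correctly introduces the degree-$(2N+1)$ polynomial $R_{t,x}(y)=(y-x)Q(y)\overline Q(y)\mp 2tP(y)\overline P(y)$, gets the root structure (odd number of simple real roots off a null set, remaining roots in conjugate pairs, $\operatorname{sign}R_{t,x}'(y_k)=(-1)^k$), and correctly anticipates that the phase is the outer phase determined by the modulus. But there is a genuine gap exactly where you flag ``the main obstacle'' and then write ``granting that matching'': the central computation is never carried out. The paper's mechanism is concrete: using the partial fractions of $u_0$ one computes $(X^*\mp 2tT_{u_0}T_{\bar u_0}-z)f$ in closed form, so that $f_{t,z}=(X^*\mp 2tT_{u_0}T_{\bar u_0}-z)^{-1}u_0$ is a ratio whose denominator is $y\mp 2tv_0(y)-z$ and whose numerator involves the $N+1$ unknowns $\zd(t,z)$ and $f_{t,z}(-p_0),\dots,f_{t,z}(-p_{N-1})$. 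Holomorphy of $f_{t,z}$ on $\C_+$ forces the numerator to vanish at the $N+1$ roots of $y\mp 2tv_0(y)=z$ lying in $\C_+$; identifying these as the even-indexed roots requires the implicit-function-theorem/Cauchy--Riemann argument showing that the real roots $y_{2k}$ (where $\gamma_t$ is increasing) move \emph{up} when $x$ is pushed to $z\in\C_+$. This yields an $(N+1)\times(N+1)$ Cramer system whose determinants are Cauchy/Vandermonde and evaluate to $\zd(t,x)=P(x)\big/\prod_{k=1}^N(x-y_{2k-1})$. None of this is in your proposal: you do not exhibit the linear system, you do not say which roots give the vanishing conditions or why, and you do not evaluate the determinants. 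Consequently the modulus formula does not ``collapse'' from sign considerations --- in the paper it is read off from the closed form $P(x)/\prod(x-y_{2k-1})$ together with $\prod_{k=0}^{2N}(x-y_k)=\mp 2tP(x)\overline P(x)$ --- and the constant $(\mp i\,|t|/t)^\ell$ comes out of an explicit square-root bookkeeping in that computation, not from an abstract ``square-root branch'' heuristic.

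Your proposed Fourier-side route (solving the resolvent ODE in $\xi$ within a space of exponential polynomials) is a plausible alternative to the paper's physical-space partial-fraction computation, and your Blaschke/outer factorization for the phase is essentially equivalent to the paper's device of applying $\Pi_+$ to $\log\big((y^2+a^2)^\ell g_{t,x}(y)\big)$ and splitting it by the $L^2_+\oplus L^2_-$ decomposition. But as written, both the rational closed form of $\zd$ and the resulting modulus--argument identity are asserted rather than proved, so the proposal does not yet constitute a proof of the proposition.
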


\vskip0.3cm
To prove the previous proposition, we split the proof into the following lemmas.

\begin{lemma}\label{resolution eqt algebric}
    Let $u_0(y)=\frac{P(y)}{Q(y)}$ be a rational function defined as in \eqref{u0 frac rationnel}\,.
    Then, for all $t\in\R\,,$   $x\in\R\bk K_t\,,$ where $K_t$ is a finite set in $\R\,,$ the algebraic equation~\eqref{eqt alg prop}
    admits an odd number of simple real solutions
    $$
        y_0:=y_0(t,x)\,<\, \ldots\,<\, y_{2\ell}:=y_{2\ell}(t,x)\,.
    $$
    Furthermore, the function $\gamma_t(y):=y\mp 2t \va{u_0(y)}^2$ is increasing near $y_{2k}\,,$ $k=0\,,\cdots,\,\ell$\,, and decreasing near $y_{2k+1}\,,$ $k=0\,,\cdots,\,\ell-1$\,.
\end{lemma}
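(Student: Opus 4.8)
The plan is to analyze the real-variable function $\gamma_t(y) := y \mp 2t\,|u_0(y)|^2$ on $\R$ and count its level sets $\gamma_t(y) = x$. Since $u_0 = P/Q$ is rational with $Q$ having only roots in the lower half-plane (so $Q$ has no real zeros), the function $|u_0(y)|^2 = P(y)\overline{P(\bar y)}/\big(Q(y)\overline{Q(\bar y)}\big)$ is a genuine real rational function of $y$, smooth on all of $\R$, and tending to a finite limit (the square of the leading-coefficient ratio, or $0$ if $\deg P < \deg Q$) as $y \to \pm\infty$. Hence $\gamma_t$ is a smooth function on $\R$ with $\gamma_t(y) = y + O(1)$ at infinity; in particular $\gamma_t(y) \to \pm\infty$ as $y \to \pm\infty$, so $\gamma_t$ is surjective onto $\R$ and every fiber $\gamma_t^{-1}(x)$ is a nonempty compact set. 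This already forces the number of real solutions (counted without multiplicity) to be finite for each $x$, and the asymptotics $\gamma_t(y)\sim y$ force the solution count to be odd whenever all solutions are simple: going from $y = -\infty$ (where $\gamma_t < x$) to $y = +\infty$ (where $\gamma_t > x$), the graph crosses the level $x$ transversally an odd number of times.

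The second step is to remove the degenerate values of $x$. The derivative $\gamma_t'(y) = 1 \mp 2t\,\frac{d}{dy}|u_0(y)|^2$ is itself a rational function of $y$ with no real poles, hence equals $R(y)/S(y)$ with $S$ having no real zeros; the bad set is where $\gamma_t$ has a critical point that lies on the fiber, i.e. $x \in \gamma_t(\{y : \gamma_t'(y) = 0\})$. Since $\gamma_t'$ is a nonzero rational function (its numerator $R$ is a nonzero polynomial, because $\gamma_t' \to 1$ at infinity), it has finitely many real zeros $c_1, \dots, c_m$, and I set $K_t := \{\gamma_t(c_1), \dots, \gamma_t(c_m)\}$, a finite subset of $\R$. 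For $x \notin K_t$, no solution of $\gamma_t(y) = x$ is a critical point of $\gamma_t$, so every solution is simple ($\gamma_t'(y_j) \neq 0$); combined with Step 1 the solution count is odd, say $2\ell+1$, and we label them $y_0 < \dots < y_{2\ell}$.

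The third step is the monotonicity/sign pattern. Between consecutive solutions $y_j < y_{j+1}$ the continuous function $\gamma_t - x$ does not vanish, so it keeps a constant sign there; since each $y_j$ is a transversal crossing, the sign of $\gamma_t' (y_j)$ alternates with $j$. Because $\gamma_t(y) < x$ for $y$ slightly less than $y_0$ (from the $-\infty$ asymptotics and the fact that $y_0$ is the smallest solution) and $\gamma_t(y) > x$ for $y$ slightly greater than $y_0$, we get $\gamma_t'(y_0) > 0$; hence $\gamma_t'(y_{2k}) > 0$ and $\gamma_t'(y_{2k+1}) < 0$ for all admissible $k$, which is exactly the claimed statement that $\gamma_t$ is increasing near the even-indexed roots and decreasing near the odd-indexed ones.

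I do not expect a genuine obstacle here; this is an elementary real-analysis argument (a rational function whose graph is asymptotic to the line $y \mapsto y$). The only point requiring a little care is verifying that $|u_0|^2$ really extends to a bounded real-analytic function on $\R$ with a well-defined rational structure and no real poles — this is where the hypothesis $\operatorname{Im}(p_k) < 0$ and the fact that $Q(y)\overline{Q(\bar y)} = \prod_k |y + \bar p_k|^2 > 0$ on $\R$ is used — and, relatedly, checking that $\gamma_t'$ is not identically zero, which follows immediately from $\gamma_t'(y) \to 1$ as $|y| \to \infty$. Everything else is the intermediate value theorem plus a parity count.
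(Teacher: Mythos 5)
Your argument is correct, and the overall strategy coincides with the paper's: study the real function $\gamma_t(y)=y\mp 2t\va{u_0(y)}^2$, discard the finite set $K_t$ of its critical values so that all solutions of $\gamma_t(y)=x$ are simple, and read off the alternating monotonicity from the asymptotics $\gamma_t(y)\sim y$ as $y\to\pm\infty$ together with the intermediate value theorem.

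The one step you handle differently is the odd parity of the number of real solutions. You obtain it purely by a sign-change count: $\gamma_t-x$ is negative near $-\infty$, positive near $+\infty$, and crosses zero only transversally, so the number of crossings is odd. The paper instead clears denominators to the polynomial equation $(y-x)Q(y)\overline{Q}(y)\mp 2tP(y)\overline{P}(y)=0$, of odd degree $2N+1$, and notes that its non-real roots come in conjugate pairs, so the real roots are odd in number. Both are valid; your route is a bit more elementary and does not use the rational structure beyond finiteness of the solution set (so it would transfer to more general data, as indeed happens in the proof of Theorem~\ref{Theorem solution mutivalué- intro}). The paper's polynomial formulation has the side benefit of cataloguing the remaining $2(N-\ell)$ non-real, pairwise conjugate solutions $y_{2\ell+1},\dots,y_{2N}$, which are not needed for this lemma but are essential input for Lemma~\ref{ZD=Determinant}. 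One small point to make explicit in your write-up: finiteness of $\gamma_t^{-1}(x)$ does not follow from compactness of the fiber alone but from the fact that the numerator of the rational function $\gamma_t(y)-x$ is a nonzero polynomial (it has odd degree), which you do have available.
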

\begin{figure}[ht]
    \begin{tikzpicture}
    \begin{axis}[
        axis lines = center,
        xlabel = \(y\),
        ylabel = {\(x\)},
    ]
    \addplot [
        domain=-8:8, 
        samples=100, 
        color=red,
    ]
    {x-4/(x^2+1)};
    \addplot [
        domain=-8:8, 
        samples=100, 
        color=blue,
        ]
        {-3};
    \addplot [
        domain=-8:8, 
        samples=100, 
        color=green,
        ]
        {1}; 
    \end{axis}
    \end{tikzpicture}
    \caption{For an initial data $u_0(y):=\frac{1}{y+i}$\,, we have in red the graph of $\gamma_2(y):=y-2\cdot2\va{u_0(y)}^2=y-\frac{2\cdot2}{y^2+1}$\,. In green, we have the graph of $x=1\,.$ The abscissa of the intersection of the axe $x=1$ with the graph of $\gamma_2(y)$ corresponds to the unique real solution $y_0$ of $\gamma_2(y)=1\,.$ In blue, we have the graph of $x=-3\,.$ The abscissas of the intersection of the graph $\gamma_2(y)$ with $x=-3$ correspond to the points $y_0<y_1<y_2$ solutions to the algebraic equation $\gamma_2(y)=-3\,.$  }
\end{figure}
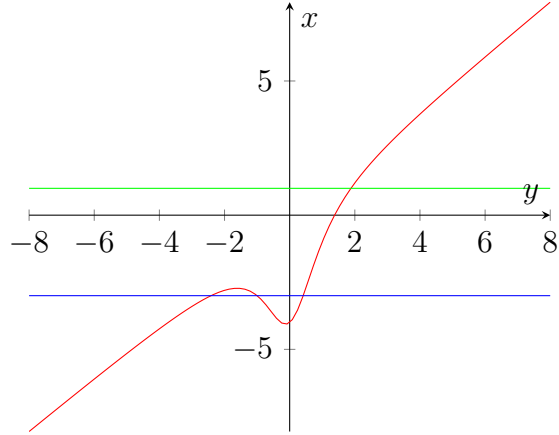
\begin{proof}
    Given $u_0(y)=\frac{P(y)}{Q(y)}$ as in \eqref{u0 frac rationnel}\,, we introduce~\footnote{~Observe that when $y\in\R\,,$ $v_0(y)=\va{u_0(y)}^2\,.$} 
    $$
    y\in\C\longmapsto v_0(y):=\frac{P(y)\overline{P}(y)}{Q(y)\overline{Q}(y)}\,.
    $$
    Our goal is to study the \textit{real} solutions of the algebraic equation~\eqref{eqt alg prop}\,. Observe  that, $y$ is a real solution of~\eqref{eqt alg prop}\,, if and only if,
    $y$ is a real solution of 
    $$
        y\mp 2t v_0(y)=x\,, \qquad t\in\R\,,
    $$
    that is, if and only if, $y$ is a real solution of
    the polynomial equation of degree $2N+1$\,, 
    \be\label{eqt pol}
        (y-x)Q(y)\overline{Q}(y)\,\mp\,2t P(y)\bar{P}(y)=0\,.
    \ee
Now, focusing on \eqref{eqt pol}, one notices $y\in\C\bk\R$ is a solution to  \eqref{eqt pol}\,, if and only if, its complex conjugate $\bar{y}$ is a solution to  \eqref{eqt pol}\,.
    Therefore, the polynomial equation~\eqref{eqt pol} 
     of degree $2N+1$ 
    admits an odd number of real solutions 
    $$
        y_0:=y_0(t,x)\,\leq\, \ldots\,\leq\, y_{2\ell}:=y_{2\ell}(t,x)\,.
    $$
    Discarding a finite set of critical values\footnote{~We mean by critical values of a function $\gamma_t$, the $\gamma_t$--image of the critical points of $\gamma_t$\,, i.e. the $\gamma_t$--images of the points where $\gamma_t'(y)=0\,.$} $x$ of the function $\gamma_t(y):=y\mp 2t \va{u_0(y)}^2$  for a given $t$\,, one may assume that these real solutions $y_k$ are simple and that the function 
   $\gamma_t$
    is increasing in a neighborhood of the points $y_{2k}\,,$ $k=0, \ldots\,,\ell\,,$ and decreasing in a neighbor of the points $y_{2k+1}$\,, $k=0\,,\ldots\,,,\ell-1\,,$ as $y\longmapsto \gamma_t(y)$ is a continuous function behaving like
    \bes
        \gamma_t(y)\underset{y\to\pm\infty}{\sim} y\,. 
    \ees
\end{proof}

\begin{lemma}\label{ZD=Determinant}
Let $u_0(y)=\frac{P(y)}{Q(y)}$ be the rational function  and $(p_k)_{k=0\,,\,\cdots\,,\,N-1}$ be the complex constants defined in \eqref{u0 frac rationnel}\,.   Moreover denote, for almost every $x\in\R\,,$  $y_0:=y_0(t,x)\,,\,\cdots\,,\,y_{2N}:=y_{2N}(t,x)$  the solutions of the equation
    \be\label{eqt alg P et Q}
        y\mp 2t\,\frac{P(y)\overline{P}(y)}{Q(y)\overline{Q}(y)}=x\,, \qquad t\in\R\,.
    \ee
   Then, the zero--dispersion limit of \eqref{CM} is given by 
\be\label{ZD u0= det/det-lemme}
           \zd(t,x)=
           \frac{u_0(y_0) u_0(y_2)\ldots u_0(y_{2N})
           \left|\begin{array}{ccccc}
            1&\frac{1}{y_0+p_0} & \frac{1}{y_0+p_1} & \ldots & \frac{1}{y_0+p_{N-1}}
            \\
            1 & \frac{1}{y_2+p_0} & \frac{1}{y_2+p_1} & \ldots & \frac{1}{y_2+p_{N-1}}
            \\
            \vdots
            \\
             1& \frac{1}{y_{2N}+p_0}&\frac{1}{y_{2N}+p_1} & \ldots & \frac{1}{y_{2N}+p_{N-1}}
            \end{array}\right|}{\left|\begin{array}{ccccc}
            1&\frac{u_0(y_0)}{y_0+p_0} & \frac{u_0(y_0)}{y_0+p_1} & \ldots & \frac{u_0(y_0)}{y_0+p_{N-1}}
            \\
            1 & \frac{u_0(y_2)}{y_2+p_0} & \frac{u_0(y_2)}{y_2+p_1} & \ldots & \frac{u_0(y_2)}{y_2+p_{N-1}}
            \\
            \vdots
            \\
             1& \frac{u_0(y_{2N})}{y_{2N}+p_0}&\frac{u_0(y_{2N})}{y_{2N}+p_1} & \ldots & \frac{u_0(y_{2N})}{y_{2N}+p_{N-1}}
            \end{array}\right|}\,.
    \ee
\end{lemma}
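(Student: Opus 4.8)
I would start from the explicit formula \eqref{explicit formula R zero dispersion} of Theorem~\ref{weak limit theorem zero dispersion}: setting $g_z:=\big(\operatorname{Id}\,\mp\,2tT_{u_0}T_{\bar u_0}(X^*-z)^{-1}\big)^{-1}u_0\in\LtwoR$, one has $\zd(t,z)=g_z(z)$ for $z\in\C_+$, so it suffices to identify $g_z$. The plan is to exploit rationality of $u_0=P/Q$: since $u_0$ is bounded and holomorphic on $\C_+$, $T_{u_0}$ acts on $\LtwoR$ as multiplication by $u_0$, while $(X^*-z)^{-1}$ is the compressed Cauchy resolvent $g\mapsto\frac{g-g(z)}{x-z}$ (both checked directly from the definitions and \eqref{remove I+}). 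Substituting into $g_z\mp 2tT_{u_0}T_{\bar u_0}(X^*-z)^{-1}g_z=u_0$, expanding $\bar u_0=\sum_{k=0}^{N-1}\frac{c_k}{x+p_k}$ with $c_k=\bar P(-p_k)/\bar Q'(-p_k)$, and applying the residue formula for $\Pi$ on functions with simple poles in $\C_+$, one obtains — after clearing denominators by $(x-z)Q(x)\bar Q(x)$ — a functional equation forcing $g_z$ to be the rational function $g_z=\mathrm{Num}_z/\Phi_z$, where
\[
\Phi_z(x):=(x-z)Q(x)\bar Q(x)\,\mp\,2tP(x)\bar P(x)
\]
has degree $2N+1$ (its roots being exactly the solutions of \eqref{eqt alg P et Q} with the real parameter $x$ replaced by $z$), and $\mathrm{Num}_z$ is a polynomial of degree $2N$ that is affine in the $N+1$ scalars $g_z(z)$ and $g_z(-p_0),\dots,g_z(-p_{N-1})$.

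Next I would pin down those $N+1$ scalars. For $z\in\C_+$ one has $\Phi_z(x)=(x-z)\va{Q(x)}^2\mp 2t\va{P(x)}^2$ on the real axis, and $\operatorname{Im}(\Phi_z(x))=-\operatorname{Im}(z)\va{Q(x)}^2$ is nonzero and independent of $t$; hence $\Phi_z$ has no real root, and a homotopy in $t$ from $t=0$ (where $\Phi_z=(x-z)Q\bar Q$ has the root $z$ and the $N$ roots $-p_k$ of $\bar Q$ in $\C_+$) shows that exactly $N+1$ of the $2N+1$ roots of $\Phi_z$ lie in $\C_+$. I denote them $y_0,y_2,\dots,y_{2N}$, compatibly with Lemma~\ref{resolution eqt algebric} (as $z$ tends to a generic real $x$ these converge to the $\ell+1$ real solutions of \eqref{sol of the branches} near which $\gamma_t$ is increasing, together with $N-\ell$ complex solutions in $\C_+$). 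Since $g_z\in\LtwoR$ has all its poles in $\C_-$, the numerator $\mathrm{Num}_z$ must vanish at every $y_{2m}$, which gives $N+1$ linear equations in the $N+1$ unknowns.

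The heart of the proof is to simplify these vanishing conditions. At each root, $\Phi_z(y_{2m})=0$ reads $(y_{2m}-z)\bar Q(y_{2m})=\pm 2t\,u_0(y_{2m})\bar P(y_{2m})$; substituting this and using $\bar Q(y_{2m})/(y_{2m}+p_k)=\prod_{j\ne k}(y_{2m}+p_j)$, the common factors $P(y_{2m})$, $\bar P(y_{2m})$, $\bar Q(y_{2m})$ and $(y_{2m}-z)$ all cancel, and the $m$-th equation collapses to the Cauchy-type relation
\[
\frac{\zeta}{u_0(y_{2m})}+\sum_{k=0}^{N-1}\frac{\nu_k}{y_{2m}+p_k}=1,\qquad m=0,1,\dots,N,
\]
where $\zeta:=g_z(z)=\zd(t,z)$ and $\nu_k:=\pm 2t\,c_k\big(g_z(-p_k)-g_z(z)\big)/(-p_k-z)$. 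This is a square $(N+1)\times(N+1)$ linear system, whose matrix is non-degenerate for all but finitely many $x$ (a Cauchy-type matrix, and solvable since $g_z$ exists by Theorem~\ref{weak limit theorem zero dispersion}); solving for $\zeta$ by Cramer's rule and then rescaling the $m$-th row of the denominator determinant by $u_0(y_{2m})$ — which pulls the product $\prod_m u_0(y_{2m})$ into the numerator — yields precisely the ratio of determinants in \eqref{ZD u0= det/det-lemme}. Letting $z\to x$ along $\C_+$ gives the claim for almost every real $x$, the exceptional points (where $\Phi$ has a multiple root, or some $u_0(y_{2m})$ or $P(y_{2m})$ vanishes, or the matrix degenerates) forming a finite set absorbed into $K_t$.

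The main obstacle is exactly this last simplification: one must first argue that $g_z$ is \emph{forced} into the rational shape $\mathrm{Num}_z/\Phi_z$ (combining the uniqueness from Theorem~\ref{weak limit theorem zero dispersion} with the functional equation), then handle the algebra of the rational data with care — the relation $\Phi_z(y_{2m})=0$ doing all the work in the cancellations — so that the $N+1$ Hardy-space conditions $\mathrm{Num}_z(y_{2m})=0$ collapse to the transparent Cauchy system above; this goes hand in hand with the bookkeeping that precisely $N+1$ roots of $\Phi_z$ sit in $\C_+$ and that they are the even-indexed ones in the labelling of Lemma~\ref{resolution eqt algebric}.
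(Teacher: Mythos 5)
Your proposal follows essentially the same route as the paper's proof: starting from the explicit formula, you exploit the rational structure of $u_0$ to turn the resolvent equation into a rational function with denominator $(y-z)Q(y)\overline{Q}(y)\mp 2tP(y)\overline{P}(y)$, impose that its numerator vanish at the $N+1$ roots lying in $\C_+$ (forced by membership in the Hardy space), and solve the resulting $(N+1)\times(N+1)$ linear system by Cramer's rule, exactly as in the paper. The one point where you genuinely diverge is the localization of the roots: you note that $\operatorname{Im}\big((x-z)\va{Q(x)}^2\mp 2t\va{P(x)}^2\big)=-\operatorname{Im}(z)\va{Q(x)}^2\neq 0$ on $\R$ and deform $t$ to $0$ to count $N+1$ roots in $\C_+$, whereas the paper perturbs $x$ into $\C_+$ and combines the holomorphic implicit function theorem, the Cauchy--Riemann equations, and the monotonicity of $\gamma_t$ from Lemma~\ref{resolution eqt algebric}. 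Your homotopy gives the count more cleanly, but you still need the monotonicity input to identify those roots with the even-indexed real solutions $y_0,y_2,\dots,y_{2\ell}$ as $z\to x$, which you correctly acknowledge; both versions leave the nondegeneracy of the Cramer system implicit, so no new gap is introduced.
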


\begin{proof}
The main component is to use the explicit formula~\eqref{explicit formula R zero dispersion-intro} of $\zd$\,, 
which can be 
be reexpressed using~\eqref{remove I+} as 
\be\label{zd[u0] preuve}
    \zd(t,z)=\frac{1}{2\pi i}I_+\big[(X^*\mp 2tT_{u_0}T_{\overline{u_0}}-z)^{-1}u_0\big]\,, \qquad z\in\C_+\,,
   \ee
   where $I_+$ is defined in \eqref{I+}\,.   The goal is to transform \eqref{zd[u0] preuve} into \eqref{ZD u0= det/det-lemme}\,. For that, we decompose $u_0$ in terms of its
partial fractional decomposition $u_0(y)=\sum_{k=0}^{N-1}\frac{c_k}{y+\overline{p_k}}\,,$ $c_k\in\C\,,$
to infer by \eqref{Tv} and \eqref{X*}\,,
      \begin{align}\label{X*+2tTuTu-z}
       (X^*\,\mp\,2tT_{u_0}T_{\overline{u_0}}\,-z)f(y)
       =\,(y\,\mp\,
        2t\va{u_0(y)}^2-z)f(y)\,+\,\frac{1}{2\pi i}I_+(f)\,\;\notag
        \\
        \,\pm\, 2t u_0(y) \sum_{k\geq 0}^{N-1}\frac{\overline{c_k}}{y+p_k}f(-p_k)\,.
   \end{align}
   Indeed, for all $f\in\LtwoR\,,$
    $$
        T_{\overline{u_0}}f(y)=\sum_{k=0}^{N-1}\,\Pi_+\left(\frac{\overline{c_k}}{y+p_k}f(y)\right)
        =\overline{u_0}(y)f(y)-\sum_{k=0}^{N-1}\frac{\overline{c_k}}{y+p_k}f(-p_k)\,.
    $$
   Thus, for all $f\in\LtwoR\,,$
   \[
        T_{u_0}T_{\overline{u_0}}f(y)=\va{u_0(y)}^2f(y)-u_0(y)\sum_{k\geq 0}^{N-1}\frac{\overline{c_k}}{y+p_k}f(-p_k)\,.
   \]
Now, observe since formula~\eqref{X*+2tTuTu-z} is valide for any $f\in\LtwoR\,,$ then one can extended it to any holomorphic function $f$ in $\C_+$ whose trace on $\R$ is in $L^2(\R)\,.$ That is, if  we denote by 
\bes
    y\in\C\longmapsto v_0(y):=\frac{P(y)\bar{P}(y)}{Q(y)\overline{Q}(y)}\,,
\ees
then the following identity holds
   \begin{align}\label{X*+2tTuTu-z-complex}
       (X^*\,\mp\,2tT_{u_0}T_{\overline{u_0}}\,-z)f(y)
       =\,(y\,\mp\,
        2tv_0(y)-z)f(y)\,+\,\frac{1}{2\pi i}I_+(f)\;\notag
        \\
        \,\pm\, 2t u_0(y) \sum_{k\geq 0}^{N-1}\frac{\overline{c_k}}{y+p_k}f(-p_k)\,,
   \end{align}
   for all $y\in\C_+$\,, and for all holomorphic function $f$ on $\C_+$ whose trace is in $L^2\,.$
   In particular, for  $f(y)=f_{t,z}(y):=(X\,\mp\,2tT_{u_0}T_{\overline{u_0}}\,-z)^{-1}u_0(y)\in \LtwoR$\,, we  infer by~\eqref{zd[u0] preuve}\,,
   \[
        u_0(y)=(y\,\mp\,2tv_0(y)-z)f_{t,z}(y)+\zd(t,z)\,\pm\, 2t u_0(y) \sum_{k\geq 0}^{N-1}\frac{\overline{c_k}}{y+p_k}f_{t,z}(-p_k)\,,
   \]
    or
    \be\label{f[tz]}
    f_{t,z}(y)=\frac{\,u_0(y)-\zd(t,z)\,\mp\, 2t u_0(y)  \displaystyle\sum_{k\geq 0}^{N-1}\frac{\overline{c_k}}{y+p_k}f_{t,z}(-p_k)\,}{y\,\mp\,2tv_0(y)-z}\,.
    \ee
    However, recall  $y\mapsto f_{t,z}(y)$
   is a holomorphic function in the upper--half complex plane. This means that the zeros in $\C_+$ of the denominator of $f_{t,z}$ must cancel its numerator.
    Therefore, the next step is to find the zeros of the algebraic equation $y\mp2t v_0(y)=z$ on $\C_+$\,, with the note that, at the end of the day, $z\in\C_+$ will be replaced by $x\in\R$ almost everywhere, as $\zd$ belongs to the Hardy space $L^2_+(\R)$ thanks to \eqref{zd can be extended to the Hardy space}\,.

\vskip0.25cm
Let $x\in\R\,.$  In view of Lemma~\ref{resolution eqt algebric}\,, the algebraic equation $y\mp2t v_0(y)=x $ admits an odd number of real solutions~\footnote{~We recall that the real solutions $y$ of the equation  $y\mp2t v_0(y)=x $
 are the same real solutions of $y\mp2t \va{u_0(y)}^2=x $ as $v_0(y)=\va{u_0(y)}^2$ when $y$ is real.}
  $$
        y_0:=y_0(t,x)\,<\, \ldots\,<\, y_{2\ell}:=y_{2\ell}(t,x)\,.
    $$
Moreover, we denote by 
    \[
        y_{2\ell+1}:=y_{2\ell+1}(t,x)\,,\,\ldots\,,\, y_{2N}:=y_{2N}(t,x)\,,
    \]
    the remaining solutions of $y\mp2t v_0(y)=x $ 
    belonging to the complex plane,  where thanks to \eqref{eqt pol} we notice $y_{2p-1}=\overline{y_{2p}}$\, for all $p=\ell+1\,,\,\cdots\,,\,N\,$; and in what follows, we suppose $\operatorname{Im}(y_{2p})>0$ for all $p=\ell+1\,,\,\cdots\,,\,N\,.$ 
    \vskip0.2cm
     By moving $x= z$ slightly up to the upper half--complex plane, one proves by using the implicit function theorem in its holomorphic version, that $ z\mapsto y_k(t,z)$ is a holomorphic function and thus satisfies the Cauchy--Riemann equations
    \bes
        \frac{\p\operatorname{Im}(y_k)}{\p\operatorname{Im}(z)}=\frac{\p\operatorname{Re}(y_k)}{\p\operatorname{Re}(z)}\,.
    \ees
    Besides, recall for all $k=0\,, \ldots \ell\,,$ $j=1\,,\ldots\,,\ell \,,$ 
    \bes
        \frac{\p\operatorname{Re}(y_{2k})}{\p\operatorname{Re}(z)}>0\,,
        \qquad
        \frac{\p\operatorname{Re}(y_{2j-1})}{\p\operatorname{Re}(z)}<0\,,
    \ees
since by Lemma~\ref{resolution eqt algebric} the function $\gamma_t(y):=y\mp 2t v_0(y)$ is increasing near $y_{2k}\,,$ $k=0\,,\cdots,\,\ell$\,, and decreasing near $y_{2k+1}\,,$ $k=0\,,\cdots,\,\ell-1$\,.
     As a result,
    for all $k=0\,,\ldots N\,,$ $j=1\,,\ldots\,,N\,,$
    \[
        \frac{\p\operatorname{Im}(y_{2k})}{\p\operatorname{Im}(z)}>0\,,
        \qquad
        \frac{\p\operatorname{Im}(y_{2j-1})}{\p\operatorname{Im}(z)}<0\,.
    \]
     That is $(y_{2k})_{k=0\,,\ldots N}\subseteq\C_+$\,, and thus, at these points, the numerator of \eqref{f[tz]} must vanish.
    Consequently,   one deduces the following linear system of unknowns $ZD[u_0](t,z)\,,$ and $(f_{t,z}(-p_j))_{j=1,\ldots, N-1}$\,,
    \vskip0.001cm
    $$
        u_0(y_{2k})=\zd(t,z)\,\pm\, 2t u_0(y_{2k}) \sum_{j\geq 0}^{N-1}\frac{\overline{c_j}}{y_{2k}+p_j}f_{t,z}(-p_j)\,, \qquad k=0, \cdots,N\,.
    $$
    \vskip0.1cm
    \noindent
     Applying the Cramer rule, one finds for all $z\in\C_+\,,$ 
     \vskip0.05cm
      \be\label{ZD u0= det/det}
           \zd(t,z)=
           \frac{u_0(y_0) u_0(y_2)\ldots u_0(y_{2N})
           \left|\begin{array}{ccccc}
            1&\frac{1}{y_0+p_0} & \frac{1}{y_0+p_1} & \ldots & \frac{1}{y_0+p_{N-1}}
            \\
            1 & \frac{1}{y_2+p_0} & \frac{1}{y_2+p_1} & \ldots & \frac{1}{y_2+p_{N-1}}
            \\
            \vdots
            \\
             1& \frac{1}{y_{2N}+p_0}&\frac{1}{y_{2N}+p_1} & \ldots & \frac{1}{y_{2N}+p_{N-1}}
            \end{array}\right|}{\left|\begin{array}{ccccc}
            1&\frac{u_0(y_0)}{y_0+p_0} & \frac{u_0(y_0)}{y_0+p_1} & \ldots & \frac{u_0(y_0)}{y_0+p_{N-1}}
            \\
            1 & \frac{u_0(y_2)}{y_2+p_0} & \frac{u_0(y_2)}{y_2+p_1} & \ldots & \frac{u_0(y_2)}{y_2+p_{N-1}}
            \\
            \vdots
            \\
             1& \frac{u_0(y_{2N})}{y_{2N}+p_0}&\frac{u_0(y_{2N})}{y_{2N}+p_1} & \ldots & \frac{u_0(y_{2N})}{y_{2N}+p_{N-1}}
            \end{array}\right|}\,.
    \ee
    \vskip0.1cm
    Hence, for almost every $x\in\R\,,$ identity  \eqref{ZD u0= det/det-lemme} holds, as 
    $\zd(t)$ belongs to the Hardy space $L^2_+(\T)$ for all $t\,,$ since it is a holomorphic function, exhibiting a finite trace in $L^2(\R)$ thanks to inequality~\eqref{zd can be extended to the Hardy space}\,. 
     
\end{proof}

\begin{lemma}[Solving the determinants of Lemma~\ref{ZD=Determinant}]\label{Solving the determinant}
Under the same conditions and notations as in Lemma~\ref{ZD=Determinant}\,,
   the zero--dispersion limit of \eqref{CM} associated with $u_0=\frac{P(y)}{Q(y)} $ defined in \eqref{u0 frac rationnel}\,, is given for almost every $x\in\R$ by
    \begin{align}\label{resolution system}
        \zd(t,x)
            \;=\;
            \frac{P(x)}{\displaystyle\prod_{k=1}^N(x-y_{2k-1})}\;.
    \end{align}
\end{lemma}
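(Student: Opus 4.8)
The plan is to start from the determinant identity \eqref{ZD u0= det/det-lemme} and collapse its right-hand side algebraically. First I would clean up the ratio: pulling the scalar $u_0(y_{2k})$ out of the $k$-th row of the denominator determinant, the products $\prod_{k=0}^{N}u_0(y_{2k})$ cancel against the prefactor of the numerator determinant, so that
\[
   \zd(t,x)=\frac{\Delta(1)}{\Delta(1/u_0)}\,,\qquad
   \Delta(g):=\det\Big[\,\big(g(y_{2k})\big)_{0\le k\le N}\ \Big|\ \Big(\tfrac{1}{y_{2k}+p_j}\Big)_{0\le k\le N,\ 0\le j\le N-1}\,\Big]\,,
\]
a quotient of two $(N+1)\times(N+1)$ ``bordered Cauchy'' determinants in the $N+1$ common nodes $y_{2k}$, all of whose $g$-dependence sits in the first column. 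For the bookkeeping it is convenient to set $R(y):=(y-x)Q(y)\overline Q(y)\mp2tP(y)\overline P(y)$, which by \eqref{eqt pol} is monic of degree $2N+1$, hence $R(y)=\prod_{k=0}^{2N}(y-y_k)$, and to factor $R=R_{\mathrm{even}}R_{\mathrm{odd}}$ with $R_{\mathrm{even}}(y):=\prod_{k=0}^{N}(y-y_{2k})$ (monic, degree $N+1$) and $R_{\mathrm{odd}}(y):=\prod_{k=1}^{N}(y-y_{2k-1})$ (monic, degree $N$); notice that $R_{\mathrm{odd}}(x)=\prod_{k=1}^{N}(x-y_{2k-1})$ is exactly the denominator appearing in \eqref{resolution system}. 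Throughout one takes $t\neq0$ (the case $t=0$ being trivial since $\zd(0,\cdot)=u_0$ by \eqref{explicit formula R zero dispersion}) and $x$ outside a finite exceptional set, so that the $y_{2k}$ are pairwise distinct and distinct from $x$ and from every $-p_j$, and $P(x)\neq0$.

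The key step is to collapse $\Delta(g)$. Expanding along the first column and evaluating each resulting $N\times N$ Cauchy minor by the classical Cauchy determinant formula --- using $\overline Q(y)=\prod_{j=0}^{N-1}(y+p_j)$ and $R_{\mathrm{even}}'(y_{2k})=\prod_{i\neq k}(y_{2k}-y_{2i})$ --- one obtains
\[
   \Delta(g)=C\,\sum_{k=0}^{N}\frac{g(y_{2k})\,\overline Q(y_{2k})}{R_{\mathrm{even}}'(y_{2k})}\,,\qquad
   C=(-1)^{N}\,\frac{\prod_{0\le i<j\le N}(y_{2j}-y_{2i})\ \prod_{0\le i<j\le N-1}(p_j-p_i)}{\prod_{k=0}^{N}\overline Q(y_{2k})}\,,
\]
where $C$ does not depend on $g$ and is nonzero in the present range of $x$. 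Substituting $g=1$ and $g=1/u_0$, the factor $C$ cancels and
\[
   \zd(t,x)=\frac{\displaystyle\sum_{k=0}^{N}\frac{\overline Q(y_{2k})}{R_{\mathrm{even}}'(y_{2k})}}{\displaystyle\sum_{k=0}^{N}\frac{\overline Q(y_{2k})}{u_0(y_{2k})\,R_{\mathrm{even}}'(y_{2k})}}\,.
\]

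Next I would evaluate the two sums by residues. In the numerator, $\sum_{k}\overline Q(y_{2k})/R_{\mathrm{even}}'(y_{2k})=\sum_{k}\operatorname{Res}_{y=y_{2k}}\big(\overline Q/R_{\mathrm{even}}\big)$, and since $\overline Q,R_{\mathrm{even}}$ are monic with $\deg\overline Q=\deg R_{\mathrm{even}}-1$ the sum of all residues of $\overline Q/R_{\mathrm{even}}$ equals $1$, so the numerator is $1$. In the denominator, $u_0(y_{2k})=P(y_{2k})/Q(y_{2k})$ together with $R(y_{2k})=0$, i.e. $Q(y_{2k})\overline Q(y_{2k})=\pm2t\,P(y_{2k})\overline P(y_{2k})/(y_{2k}-x)$, yield
\[
   \frac{\overline Q(y_{2k})}{u_0(y_{2k})\,R_{\mathrm{even}}'(y_{2k})}
   =\frac{\pm2t\,\overline P(y_{2k})}{(y_{2k}-x)\,R_{\mathrm{even}}'(y_{2k})}
   =\pm2t\operatorname{Res}_{y=y_{2k}}\frac{\overline P(y)}{(y-x)R_{\mathrm{even}}(y)}\,;
\]
since $\deg\overline P\le N-1$, the sum of all residues of $\overline P/\big((y-x)R_{\mathrm{even}}(y)\big)$ vanishes, so summing the displayed identity over $k$ yields $\pm2t$ times the negative of the residue at $y=x$, that is $\mp2t\,\overline P(x)/R_{\mathrm{even}}(x)$. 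Hence $\zd(t,x)=R_{\mathrm{even}}(x)/\big(\mp2t\,\overline P(x)\big)$. Finally, evaluating $R=R_{\mathrm{even}}R_{\mathrm{odd}}$ at $y=x$ kills the $(y-x)$ factor, so $R_{\mathrm{even}}(x)R_{\mathrm{odd}}(x)=\mp2t\,P(x)\overline P(x)$; substituting $R_{\mathrm{even}}(x)=\mp2t\,P(x)\overline P(x)/R_{\mathrm{odd}}(x)$ and cancelling $\mp2t\,\overline P(x)$ produces $\zd(t,x)=P(x)/R_{\mathrm{odd}}(x)=P(x)/\prod_{k=1}^{N}(x-y_{2k-1})$, which is \eqref{resolution system}.

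The main obstacle I foresee is getting the collapse of $\Delta(g)$ exactly right --- pinning down the sign and the precise $g$-independent prefactor $C$ so that it genuinely cancels in the ratio --- together with the routine but necessary verification that only a finite set of $x$ must be excluded for every step to be legitimate: distinctness of the $y_{2k}$, the relations $y_{2k}\notin\{x\}\cup\{-p_0,\ldots,-p_{N-1}\}$ (which rest on $P(x)\neq0$ and on $u_0=P/Q$ being in lowest terms), and $\Delta(1/u_0)\neq0$ for the Cramer-type division. Once the two sums are in the form displayed above, the rest is elementary residue calculus.
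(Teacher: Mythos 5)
Your proposal is correct and follows essentially the same route as the paper: both start from the determinant ratio of Lemma~\ref{ZD=Determinant}, expand along the first column, evaluate the resulting Cauchy minors, reduce to the intermediate identity $\zd(t,x)=\prod_{k=0}^N(x-y_{2k})/(\mp 2t\,\overline{P}(x))$, and then substitute the factorization \eqref{prod (y-yk)=P,Q} at $y=x$. The only cosmetic difference is that you evaluate the two sums by residue calculus (sum of residues of $\overline{Q}/R_{\mathrm{even}}$ and of $\overline{P}/((y-x)R_{\mathrm{even}})$), whereas the paper resums them via Vandermonde determinants and an explicit partial-fraction decomposition --- the same computation in different clothing.
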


\begin{proof}
We recall from Lemma~\ref{ZD=Determinant}\,, for almost every $x\in\R\,,$
\[
\zd(t,x)=
           \frac{u_0(y_0) u_0(y_2)\ldots u_0(y_{2N})
           \left|\begin{array}{ccccc}
            1&\frac{1}{y_0+p_0} & \frac{1}{y_0+p_1} & \ldots & \frac{1}{y_0+p_{N-1}}
            \\
            1 & \frac{1}{y_2+p_0} & \frac{1}{y_2+p_1} & \ldots & \frac{1}{y_2+p_{N-1}}
            \\
            \vdots
            \\
             1& \frac{1}{y_{2N}+p_0}&\frac{1}{y_{2N}+p_1} & \ldots & \frac{1}{y_{2N}+p_{N-1}}
            \end{array}\right|}{\left|\begin{array}{ccccc}
            1&\frac{u_0(y_0)}{y_0+p_0} & \frac{u_0(y_0)}{y_0+p_1} & \ldots & \frac{u_0(y_0)}{y_0+p_{N-1}}
            \\
            1 & \frac{u_0(y_2)}{y_2+p_0} & \frac{u_0(y_2)}{y_2+p_1} & \ldots & \frac{u_0(y_2)}{y_2+p_{N-1}}
            \\
            \vdots
            \\
             1& \frac{u_0(y_{2N})}{y_{2N}+p_0}&\frac{u_0(y_{2N})}{y_{2N}+p_1} & \ldots & \frac{u_0(y_{2N})}{y_{2N}+p_{N-1}}
            \end{array}\right|}\,.
\]
    By expanding the determinants, one finds
    \[
    \zd(t,x)=\prod_{k=0}^{2N}u_0(y_{2k}) \;
        \frac{\displaystyle\sum_{k=0}^N(-1)^k\,\Delta_k}{\displaystyle\sum_{k=0}^N(-1)^k\,\frac{1}{u_0(y_{2k})}\,\Delta_k}
    \]
    where $\Delta_k$ is the minor obtained after removing the first column and the $k^{\text{th}}$--row of the matrix in the numerator of \eqref{resolution system}\,.
    Therefore, observing that $\Delta_k$ is a Cauchy determinant, one infers that 
    \be\label{ZDu=R/D}
\zd(t,x)=\frac{\displaystyle\sum_{k=0}^N(-1)^k\;\displaystyle\prod_{j=0}^{N-1}(y_{2k}+p_{j})\,\prod_{\underset{j,j'\neq k}{0\leq j<j'\leq N}}(y_{2j'}-y_{2j})}{
           \displaystyle\;\sum_{k=0}^{N}(-1)^k\;\frac{1}{u_0(y_{2k})}\displaystyle\displaystyle \prod_{j=0}^{N-1}(y_{2k}+p_{j})\prod_{\underset{s,s'\neq k}{0\leq s<s'\leq N}}(y_{2s'}-y_{2s})}=:\,
           \frac{R}{D}
           \,.\
\ee
First,  by using the Vandermonde determinant, one can rewrite $R$ in \eqref{ZDu=R/D}  as
\begin{align*}
        R:= &\;\sum_{k=0}^N(-1)^k\,\displaystyle\,\prod_{j=0}^{N-1}(y_{2k}+p_{j})\, \prod_{\underset{j,j'\neq k}{0\leq j<j'\leq N}}(y_{2j'}-y_{2j})
        \\
        =&\;\left|\begin{array}{ccccc}
           \displaystyle \prod_{j=0}^{N-1}(y_{0}+p_{j})& 1 & y_0 & \ldots & y_0^{N-1}
            \\
            \displaystyle \prod_{j=0}^{N-1}(y_{2}+p_{j}) & 1 & y_2 & \ldots & y_2^{N-1}
            \\
            \vdots
            \\
            \displaystyle \prod_{j=0}^{N-1}(y_{2N}+p_{j})& 1&y_{2N} & \ldots & y_{2N}^{N-1}
            \end{array}\right|
    \end{align*}
    which is equivalent to
    \[
        R=\left|\begin{array}{ccccc}
           y_0^N& 1 & y_0 & \ldots & y_0^{N-1}
            \\
             y_2^N & 1 & y_2 & \ldots & y_2^{N-1}
            \\
            \vdots
            \\
             y_{N-1}^N& 1&y_{2N} & \ldots & y_{2N}^{N-1}
            \end{array}\right|
    \]
thanks to the property of the determinant. Hence, applying once more the Vandermonde determinant, we conclude that 
    \be\label{R}
        R=(-1)^N\prod_{0\leq m<n\leq N}(y_{2n}-y_{2m})\,.
    \ee
Second, moving to the expression of $D$ in \eqref{ZDu=R/D}\,. Observe that by definition of the polynomial $Q$ in~\eqref{u0 frac rationnel}\,, we have 
\be\label{remplacer Q} 
    \frac{1}{u_0(y_{2k})}\prod_{j=0}^{N-1}(y_{2k}+p_j)\equiv\frac{\overline{Q}(y_{2k})}{u_0(y_{2k})}\,.
\ee
In addition,  recall that the $(y_{2k})_{k=0\,,\ldots N}$ are solutions of the algebraic equation~\eqref{eqt pol}\,. Hence, 
for all $k=0\,,\cdots\,, N$\,, 
    \[
   \frac{\overline{Q}(y_{2k})}{u_0(y_{2k})}\,=\,\mp 2t \frac{ \overline{P}(y_{2k})}{x-y_{2k}}\,,
    \]
    which can be rewritten as
    \[
      \frac{\overline{Q}(y_{2k})}{u_0(y_{2k})}\,
    =\,\mp 2t\left(\frac{\overline{P}(y_{2k})-\overline{P}(x)}{x-y_{2k}}+\frac{\overline{P}(x)}{x-y_{2k}}\right)\,,
    \]
    to infer via \eqref{remplacer Q} 
    \be\label{1/u0=P}
        \frac{1}{u_0(y_{2k})}\prod_{j=0}^{N-1}(y_{2k}+p_j)=\mp 2t\left(\frac{\overline{P}(y_{2k})-\overline{P}(x)}{x-y_{2k}}+\frac{\overline{P}(x)}{x-y_{2k}}\right)\,.
    \ee
Therefore, by observing that $\frac{\overline{P}(y_{2k})-\overline{P}(x)}{x-y_{2k}}$ is a polynomial in $y_{2k}$ of degree less strictly than $N-1\,,$ one finds by~\eqref{1/u0=P}\,, that $D$ defined in \eqref{ZDu=R/D} is equal to, 
\begin{align*}
        D
        =&\;\mp 2t\, \overline{P}(x)\left|\begin{array}{ccccc}
           \displaystyle \frac{1}{x-y_0}& 1 & y_0 & \ldots & y_0^{N-1}
            \\[0.3cm]
            \displaystyle \frac{1}{x-y_2} & 1 & y_2 & \ldots & y_2^{N-1}
            \\
            \vdots
            \\
            \displaystyle \frac{1}{x-y_{2N}}& 1&y_{2N} & \ldots & y_{2N}^{N-1}
            \end{array}\right|\,.
    \end{align*}
    Consequently, by using the Vandermonde determinant,
    \[
    D= \,\mp 2t \overline{P}(x)\,\sum_{k=0}^N\frac{(-1)^k}{x-y_{2k}}\prod_{\underset{m,n\neq k}{0\leq m<n\leq N}}(y_{2n}-y_{2m})\,,
    \]
    which can be rewritten as
    \be\label{D}
    D= \mp 2t \, (-1)^N \,\overline{P}(x)  \;  \frac{\displaystyle\prod_{0\leq m<n\leq N}(y_{2n}-y_{2m})}{\displaystyle\prod_{k=0}^N(x-y_{2k})}\,,
\ee
thanks to the partial fractional decomposition of
    \begin{align*}
        \frac{\displaystyle\prod_{0\leq m<n\leq N}(y_{2n}-y_{2m})}{\displaystyle\prod_{k=0}^N(x-y_{2k})}
        =&\,\sum_{k=0}^N\;\frac{\frac{\displaystyle\prod_{0\leq m<n\leq N}(y_{2n}-y_{2m})}{\displaystyle\prod_{j=0\,, j\neq k}^N(y_{2k}-y_{2j})}\ \ }{ x-y_{2k}}
        \\
        =&\,
        \sum_{k=0}^N \;\frac{\displaystyle\frac{\displaystyle\prod_{0\leq m<n\leq N}(y_{2n}-y_{2m})}{\displaystyle (-1)^{N-k}\;\prod_{0\leq j<k}(y_{2k}-y_{2j})\prod_{k<j\leq N}(y_{2j}-y_{2k})}\ }{x-y_{2k}}
        \\[0.2cm]
        =&\,
        \sum_{k=0}^N\, \frac{(-1)^{N-k}}{x-y_{2k}}\prod_{\underset{m,n\neq k}{0\leq m<n\leq N}}(y_{2n}-y_{2m})\,.
    \end{align*}
Consequently, substituting \eqref{R}\,, \eqref{D} in \eqref{ZDu=R/D}\,, one infers that 
\be\label{x-y/P}
    \zd(t,x)=\frac{\displaystyle\prod_{k=0}^N(x-y_{2k})}{\mp 2t \overline{P}(x)}\;.
\ee
Furthermore, if we take into account that $(y_k)_{k=0\,,\,\cdots\,,\, 2N}$ are solutions to \eqref{eqt alg P et Q}\,,  thus also to the polynomial equation~\eqref{eqt pol}\,, we can write
\be\label{prod (y-yk)=P,Q}
   \prod_{k=0}^{2N}(y-y_k)=(y-x)Q(y)\overline{Q}(y)\mp 2t P(y)\overline{P}(y)\,,
\ee
and when $y=x$ in the above equation, we obtain $\displaystyle\prod_{k=0}^{2N}(x-y_k)=\mp 2t P(x)\overline{P}(x)\,, $ which implies that \eqref{x-y/P} can be replaced by
\be\label{ZD[u0] P(x)/prod}
    \zd(t,x)=\frac{P(x)}{\prod_{k=1}^N(x-y_{2k-1})}\,.
\ee

\end{proof}

Now, equipped with Lemma~\ref{Solving the determinant}\,, let us prove Proposition~\ref{Zd-solution multivalue prop}.

\begin{proof}[Proof of Proposition~\ref{Zd-solution multivalue prop}]
We recall from Lemma~\ref{Solving the determinant}\,, that for almost every $x\in\R\,,$
\be\label{Zd=P/prod}
    \zd(t,x)=\frac{P(x)}{\prod_{k=1}^N(x-y_{2k-1})}\,,
\ee
where the $(y_k)_{k=0\,,\,\cdots\,,\,2\ell}$ are the real solutions of the algebraic equation~\eqref{eqt alg prop}\,, and the $(y_p)_{p=2\ell+1\,,\,\cdots\,,\,2N}$ are the complex solutions of \eqref{eqt alg P et Q} with $y_{2p-1}=\overline{y_{2p}}$\, for all $p=\ell+1\,,\,\cdots\,,\,N\,.$ We rewrite \eqref{Zd=P/prod} as
\begin{align}\label{ZD-Q/prod}
    \zd(t,x)=&\,u_0(x)\,\frac{Q(x)}{\prod_{k=1}^N(x-y_{2k-1})}
    \\
    =&\, \frac{u_0(x)}{\prod_{k=1}^\ell(x-y_{2k-1})}\,\frac{Q(x)}{\prod_{p=\ell+1}^N(x-y_{2p-1})}\notag\,.
\end{align}
The goal is to get rid of $\frac{Q(x)}{\prod_{k=\ell+1}^N(x-y_{2k-1})}\,,$ in order to express $\zd$ only in terms of $y_0\,,\,\cdots\,,\, y_{2\ell}$\,, thereby ensuring that $\zd$ can be expressed exclusively in terms of the branches of the multivalued solution of the Burgers equation~\eqref{Burgers equation}\,.

\vskip0.2cm
\noindent
For that, we recall from~\eqref{prod (y-yk)=P,Q}\,, for all $y\in\C\,,$
\[
\frac{\displaystyle
\prod_{k=0}^{2N}(y-y_k)}{Q(y)\overline{Q}(y)}=y-x\,\mp\, 2t\frac{ P(y)\overline{P}(y)}{Q(y)\overline{Q}(y)}\,.
\]
In particular, for all $y\in\R$\,,
\[
    \frac{y \mp 2t \va{u_0(y)}^2-x}{\displaystyle\prod_{k=0}^{2\ell}(y-y_k)}
    \;=\;\frac{\displaystyle\prod_{p=2\ell+1}^{2N}(y-y_p)}{\va{Q(y)}^2}\,,
\]
or, since $y_{2p-1}=\overline{y_{2p}}$ for all $p=\ell+1\,,\ldots\,,N\,,$ and as $Q(y):=(y+\overline{p}_0)\cdots(y+\overline{p}_{N-1})$ by~\eqref{u0 frac rationnel}\,,
\be\label{g(y)}
    \frac{y \mp 2t \va{u_0(y)}^2-x}{\displaystyle\prod_{k=0}^{2\ell}(y-y_k)}
    \;=\;\frac{\displaystyle\prod_{p=\ell+1}^N\va{y-y_{2p-1}}^2}{\displaystyle\prod_{j=0}^{N-1}\va{y-p_j}^2}\;.
\ee
Let $a>0\,,$ the next step is to prove that the term we need to get ride of is equal to 
\be\label{Q/prod}
    \frac{Q(x)}{\displaystyle\prod_{p=\ell+1}^N(x-y_{2p-1})}
    =
    \frac{(x+ia)^\ell}{\exp\bigg(    \Pi_+\Big(\log \big((y^2+a^2)^\ell g_{t,x}(y)\big) \Big)\bigg)\Big|_{y=x}}\,,
\ee
where 
\bes
    g_{t,x}(y):=\;\frac{\displaystyle\prod_{p=\ell+1}^N\va{y-y_{2p-1}}^2}{\displaystyle\prod_{j=0}^{N-1}\va{y-p_j}^2}\,.
\ees
Indeed, by definition of $g_{t,x}\,,$ \footnote{~ The multiplication of $g_{t,x}$ by $(y^2+a^2)^\ell$ aims to ensure that each term on the right-hand side of the following identity is in $L^2(\R)$}
\[
    \log \Big((y^2+a^2)^\ell g_{t,x}(y)\Big)=\log\left(\frac{(y+ia)^\ell\prod_{p=\ell+1}^N(y-y_{2p-1})}{\prod_{j=0}^{N-1}(y-p_j)}\right)
    +
    \log\left(\frac{(y-ia)^\ell\prod_{p=\ell+1}^N(y-\overline{y_{2p-1}})}{\prod_{j=0}^{N-1}(y-\overline{p_j})}\right)\,,
\]
where one observes the first term of the right--hand side belongs to $L^2_+(\R)$\ as $\operatorname{Im}(p_k)<0\,$, while the second term belongs to $L^2_-(\R)$ \footnote{i.e. the space of functions having a trace in $L^2(\R)$\,, such that they can be holomorphically extended to $\C_-$)}\,.
 Therefore, by the uniqueness of the decomposition of any $L^2$--function in  $L^2_+(\R)\oplus L^2_-(\R)$\,, we infer for a fixed $x\in\R\,,$
\[
    \Pi_+\Big(\log \big((y^2+a^2)^\ell g_{t,x}(y)\big) \Big)=\log\left(\frac{(y+ia)^\ell\,\prod_{k=\ell+1}^N(y-y_{2k-1})}{Q(y)}\right)\,.
\]
Applying the exponential function to both sides of the previous identity and setting  $y=x$\,, one deduces \eqref{Q/prod}\,.
Now, the only task left is to compute the right--hand side of~\eqref{Q/prod}\,. To do so, we need the following classical lemma, the proof of which will be presented later for the convenience of the reader,
 \begin{lemma}\label{Pi+}
      For any $h\in L^2(\R)$ of class $\mathcal{C}^1$ satisfying $\|h'\|_{L^\infty}<\infty$\,,
      \[
            \Pi h(x)=\frac{h(x)}{2}-\frac{i}{2\pi}\int_0^{+\infty}\frac{h(x+s)-h(x-s)}{s}\,\mathrm{d}s\,,\qquad x\in\R\,.
      \]
  \end{lemma}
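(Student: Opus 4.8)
The plan is to establish the formula for $\Pi h$ by a direct contour-integration / regularization argument, exploiting that $\Pi$ is the projection onto $L^2_+$ and that for nice $h$ the Fourier multiplier $\mathds{1}_{[0,\infty)}(\xi)$ can be realized as a principal-value singular integral. First I would recall the kernel representation: for $h$ sufficiently decaying and smooth, $\Pi h(x) = \tfrac12 h(x) + \tfrac{1}{2\pi i}\,\mathrm{p.v.}\!\int_\R \frac{h(y)}{y-x}\,\mathrm{d}y$, i.e. the Szeg\H{o} projector differs from $\tfrac12\,\mathrm{Id}$ by $\tfrac12$ times the Hilbert transform (up to the usual normalization $\widehat{\Pi h} = \mathds{1}_{[0,\infty)}\widehat h = \tfrac12(1+\operatorname{sgn})\widehat h$). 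This identity is standard but should be justified here, either by checking it on the Fourier side or by citing it; given the paper's style I would prove it quickly from $\mathds{1}_{[0,\infty)}(\xi) = \tfrac12 + \tfrac12\operatorname{sgn}(\xi)$ together with the fact that the Fourier multiplier $\operatorname{sgn}(\xi)$ corresponds to convolution with the principal-value kernel $\tfrac{1}{i\pi x}$.

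Next I would massage the principal-value integral into the stated form by the change of variables $y = x+s$ and $y = x-s$: splitting $\mathrm{p.v.}\!\int_\R \frac{h(y)}{y-x}\,\mathrm{d}y$ into $\int_{y>x} + \int_{y<x}$ and substituting $s = y-x > 0$ in the first and $s = x-y > 0$ in the second yields $\mathrm{p.v.}\!\int_0^\infty \frac{h(x+s) - h(x-s)}{s}\,\mathrm{d}s$. The hypotheses $h\in L^2$, $h\in\mathcal C^1$, $\|h'\|_{L^\infty}<\infty$ are exactly what is needed to make this legitimate: near $s=0$ the integrand $\frac{h(x+s)-h(x-s)}{s}$ is bounded by $2\|h'\|_{L^\infty}$ by the mean value theorem, so there is no singularity and the principal value is an honest convergent integral; for large $s$ the numerator is controlled using $h\in L^2$ (or, more simply, one notes the integral $\int_1^\infty \frac{h(x\pm s)}{s}\,\mathrm{d}s$ converges since $s\mapsto \tfrac1s \in L^2(1,\infty)$). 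Combining with the $\tfrac12 h(x)$ term and the factor $\tfrac{1}{2\pi i} = -\tfrac{i}{2\pi}$ gives precisely $\Pi h(x) = \tfrac{h(x)}{2} - \tfrac{i}{2\pi}\int_0^{+\infty}\frac{h(x+s)-h(x-s)}{s}\,\mathrm{d}s$.

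The main obstacle is not conceptual but a matter of care with convergence and with the class of functions for which the identity holds verbatim: the cleanest route is to first prove it for Schwartz functions (where every manipulation is trivially justified and the Fourier-side computation is immediate), and then to upgrade to $h\in L^2\cap\mathcal C^1$ with bounded derivative by a density/continuity argument. For the upgrade one uses that $\Pi$ is bounded on $L^2$ (so the left side is continuous in $h$ along an $L^2$-approximating sequence) and that the right side is also continuous: the $\tfrac12 h(x)$ term poses no issue, and for the integral one splits at $s=1$, bounding the piece on $(0,1)$ uniformly by $\|h'_n - h'\|_{L^\infty}$-type estimates and the piece on $(1,\infty)$ by Cauchy--Schwarz in $L^2$. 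One must choose the approximating sequence to converge simultaneously in $L^2$ and in the relevant $\mathcal C^1$ sense on compact sets, e.g. by mollification. A slightly slicker alternative that avoids density altogether: interpret the right-hand side directly as a tempered-distribution pairing and check that its Fourier transform equals $\mathds{1}_{[0,\infty)}(\xi)\widehat h(\xi)$, using $\widehat{\mathrm{p.v.}\,\tfrac1x}(\xi) = -i\pi\operatorname{sgn}(\xi)$; I would present whichever of these two is shorter in the final writeup, most likely the Fourier-side computation for Schwartz $h$ followed by a brief density remark.
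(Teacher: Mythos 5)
Your proposal is correct, and the second half (the change of variables $y=x\pm s$, the mean-value-theorem bound near $s=0$, and Cauchy--Schwarz at infinity) is exactly what the paper does. Where you diverge is in how you obtain the decomposition $\Pi=\tfrac12\operatorname{Id}+\tfrac{1}{2\pi i}\,\mathrm{p.v.}$: you argue on the Fourier side, writing $\mathds{1}_{[0,+\infty)}(\xi)=\tfrac12\big(1+\operatorname{sgn}(\xi)\big)$ and invoking $\widehat{\mathrm{p.v.}\,1/x}=-i\pi\operatorname{sgn}$, first for Schwartz functions and then by density (or via a distributional pairing). The paper instead works directly with the Cauchy kernel: it writes $\Pi h(x)=\lim_{\delta\to0^+}\frac{1}{2\pi i}\int_\R\frac{h(y)}{y-x-i\delta}\,\mathrm{d}y$, splits the kernel into its Poisson part $\frac{\delta}{(y-x)^2+\delta^2}$ (which produces $h(x)/2$ in the limit) and its conjugate-Poisson part $\frac{y-x}{(y-x)^2+\delta^2}$ (which produces the folded principal value), i.e.\ a Plemelj-type boundary-value computation. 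The paper's route has the advantage of working directly in the stated class $h\in L^2\cap\mathcal C^1$ with $\|h'\|_{L^\infty}<\infty$ and of producing the identity at every $x$ (interpreting $\Pi h(x)$ as the vertical boundary limit of the holomorphic extension), with no approximation step; your route is equally standard but, as you note yourself, requires either a density upgrade --- which most naturally yields the identity only almost everywhere, since $\Pi h_n\to\Pi h$ only in $L^2$ --- or a distributional Fourier computation, and you would need to say a word about why the pointwise statement for all $x$ survives. Neither issue is a gap, just a detail to settle in the writeup.
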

  Thus, applying this lemma with $h(y)=\log \big((y^2+a^2)^\ell g(y)\big)\,,$ one obtains
  \begin{align}\label{Pi(log((x2+a2)g(x)))}
    \Pi_+\Big(\log \big((y^2+a^2)^\ell g_{t,x}(y)\big) \Big)
    =&\,\frac{1}{2}\log\Big((y^2+a^2)^\ell g_{t,x}(y)\Big)
    -\frac{i\, \ell}{2\pi}\int_0^{+\infty}\frac{1}{s}\log\left(\frac{(y+s)^2+a^2}{(y-s)^2+a^2}\right) \mathrm{d}s\notag
    \\
    -&\,\frac{i}{2\pi}\int_0^{+\infty}
    \frac{\log(g_{t,x}(y+s))-\log(g_{t,x}(y-s))}{s}\,\mathrm{d}s\,.   
\end{align}
where after some computation, one finds
\be\label{2 pi arctan}
   \frac{1}{2\pi} \int_0^{+\infty}\frac{1}{s}\log\left(\frac{(y+s)^2+a^2}{(y-s)^2+a^2}\right)\, \mathrm{d}s= \frac{\pi}{2}-\arctan\left(\frac{a}{y}\right)\;.
\ee
\noindent
Indeed, let $f(y):=\int_0^{+\infty}\frac{1}{s}\log\left(\frac{(y+s)^2+a^2}{(y-s)^2+a^2}\right)\,\mathrm{d}s\,.$ By deriving $f$\,, one finds
\begin{align*}
    f'(y)
    =&\,\int_0^{+\infty}\frac{4(a^2+s^2-x^2)}{\big((y+s)^2+a^2\big)\big((y-s)^2+a^2\big)}\,\mathrm{d}s
    \\
    =&\,\int_{-\infty}^{+\infty}\frac{2(a^2+s^2-x^2)}{\big((y+s)^2+a^2\big)\big((y-s)^2+a^2\big)}\,\mathrm{d}s
    \\
    =&\,\lim_{R\to \infty}\int_{\Gamma_R} \frac{2(a^2+z^2-y^2)}{\big((y+z)^2+a^2\big)\big((y-z)^2+a^2\big)}\,\mathrm{d}z
\end{align*}
where $\Gamma_R$ is the contour composed of the real axis from $-R$ to $R$ and the upper semicircle.
Using Cauchy's residue theorem, we infer $f'(y)=\frac{2\pi a}{y^2+a^2}\,.$ Therefore, by integrating the previous expression and observing that $f(0)=0\,,$ and using that $\arctan(\theta)+\arctan(\frac{1}{\theta})=\frac{\pi}{2}\,,$ one obtains  \eqref{2 pi arctan}\,.
Consequently, \eqref{Pi(log((x2+a2)g(x)))} becomes
  \begin{align}\label{Pi(log((x2+a2)g(x)))+lemme}
    \Pi_+\Big(\log \big((y^2+a^2)^\ell g_{t,x}(y)\big) \Big)
    =&\,\frac{1}{2}\log\Big((y^2+a^2)^\ell g_{t,x}(y)\Big)-\frac{i\ell \,\pi}{2}+i\ell\,\mathrm{artan}\left(\frac{a}{y}\right)
    \notag
    \\
    -&\,\frac{i}{2\pi}\int_0^{+\infty}
    \frac{\log(g_{t,x}(y+s))-\log(g_{t,x}(y-s))}{s}\,\mathrm{d}s\,,   
\end{align}
and hence,
\begin{align}\label{exp(Pi(log((x2+a2)g(x))))}
    \exp\bigg(    \Pi_+\Big(\log \big((y^2+a^2)^\ell g_{t,x}(y)\big) \Big)\bigg)\Big|_{y=x}
    =&\,\eee^{\frac{-i\ell\, \pi}{2}}
    \sqrt{(x^2+a^2)^\ell\, g_{t,x}(x)} 
    \eee^{i\ell \, \mathrm{arctan}\left(\frac{a}{x}\right)}
    \\
    &\,\exp\left(-\frac{i}{2\pi}\int_0^{+\infty}
    \frac{\log(g_{t,x}(x+s))-\log(g_{t,x}(x-s))}{s}\,\mathrm{d}s\right)\,.\notag
\end{align}
Thus, combining \eqref{ZD-Q/prod}\,, \eqref{Q/prod} and \eqref{exp(Pi(log((x2+a2)g(x))))}\,
\begin{align}\label{ZD-gtx}
    \zd(t,x)=&\,\frac{u_0(x)}{\prod_{k=1}^\ell(x-y_{2k-1})}
    \frac{(x+ia)^\ell}{\sqrt{(x^2+a^2)^\ell}\, \mathrm{e}^{i\ell \arctan(a/x)}}\frac{\eee^{i \frac{\pi \ell}{2}}}{\sqrt{g_{t,x}(x)}}\notag
    \\[0.1cm]
    &\ \ \cdot \exp\left(\frac{i}{2\pi}\int_0^{+\infty}
    \frac{\log(g_{t,x}(x+s))-\log(g_{t,x}(x-s))}{s}\,\mathrm{d}s\right)\notag
    \\[0.1cm]
    =&\,\frac{\va{u_0(x)}}{\prod_{k=1}^\ell(x-y_{2k-1})}
    \frac{(i)^{\ell}}{\sqrt{g_{t,x}(x)}} \,\eee^{i\varphi (t,x)}\,,
\end{align}
where
\be\label{varphi}
    \varphi(t,x)=\arg(u_0(x))+\frac{1}{2\pi}\int_0^{+\infty}
    \frac{\log(g_{t,x}(x+s))-\log(g_{t,x}(x-s))}{s}\,\mathrm{d}s\,.
\ee
Substituting $g_{t,x}$ in \eqref{ZD-gtx} by its value in \eqref{g(y)} with $y=x$\,, 
\begin{align*}
    \zd(t,x)
    =&\,\frac{\va{u_0(x)}}{\prod_{k=1}^\ell(x-y_{2k-1})}
    \frac{(i)^{\ell}}{\displaystyle\sqrt{\frac{\mp 2t \va{u_0(x)}^2}{\displaystyle\prod_{k=0}^{2\ell}(x-y_k)}}} \,\eee^{i\varphi (t,x)}\,,
\end{align*}
and using the fact that the $y_k$ are solutions of the algebraic equation~$y_k\mp 2t \va{u_0(y_k)}^2=x$\,,  for all $k=0\,,\ldots\,,2\ell\,,$ we conclude
\begin{align*}
\zd(t,x)=&\,\frac{1}{(\mp 2t)^{\ell}\prod_{k=1}^\ell \va{u_0(y_{2k-1})}^2}
\frac{(i)^{\ell}}{\displaystyle\sqrt{\frac{\mp 2t} {\displaystyle(\mp2t)^{2\ell+1}\prod_{k=0}^{2\ell}\va{u_0(y_k)}^2}}}\eee^{i\varphi (t,x)}
\\[0.2cm]
=&\,
\left(\mp i\, \frac{\va{t}}{t}\right)^\ell \;\frac{\displaystyle\prod_{k=0}^{\ell}\va{u_0(y_{2k})}}{\displaystyle\prod_{k=1}^\ell\va{u_0(y_{2k-1})}} \ \ \mathrm{e}^{i \varphi(t,x)}\ ,
\end{align*}
where by \eqref{varphi} and \eqref{g(y)}\,,
\[
    \varphi(t,x)=\arg(u_0(x))+\frac{1}{2\pi}\int_0^{+\infty}
    \frac{1}{s}\log\left(\frac{\ s\mp 2t \va{u_0(x+s)}^2}{-s\mp 2t \va{u_0(x-s)}^2}\frac{\prod_{k=0}^{2\ell}(x-s-y_k)}{\prod_{k=0}^{2\ell}(x+s-y_k)}\right)\mathrm{d}s\,.
\]

\end{proof}

\begin{proof}[Proof of Lemma~\ref{Pi+}]
      For all $x\in\R\,,$
      \begin{align*}
          \lim_{\delta\to 0^+}\Pi h (x+i\delta)
          =&\,
          \lim_{\delta\to 0^+}\frac{1}{2\pi i}\int_{\R}\frac{h(y)}{y-x-i\delta}\,\mathrm{d}y
          \\
          =&
          \,
          \lim_{\delta\to 0^+} \frac{1}{2\pi} \int_{\R}\frac{\delta \,h(y)}{(y-x)^2+\delta^2}\,\mathrm{d}y
          +\frac{1}{2\pi i}\int_{\R}\frac{h(y)(y-x)}{(y-x)^2+\delta^2}\,\mathrm{d}y
      \end{align*}
      Using the Poisson integral formula on the upper half-plane of $\C\,,$ we infer for all $\delta>0\,,$
      \[
        \frac{1}{2\pi} \int_{\R}\frac{\delta\,h(y) }{(y-x)^2+\delta^2}\,\mathrm{d}y=\frac{h(x)}{2}\,.
      \]
      For the second term, observe that,
      \begin{align*}
          \lim_{\delta\to 0^+}\int_{\R}\frac{h(y)(y-x)}{(y-x)^2+\delta^2}\,\mathrm{d}y
          =&\,
          \lim_{\delta\to 0^+}\lim_{\eps\to 0}\int_{-\infty}^{x-\eps}\frac{h(y)(y-x)}{(y-x)^2+\delta^2}\,\mathrm{d}y
          \,+\,
          \lim_{\delta\to 0^+}\lim_{\eps\to 0}\int_{x+\eps}^{+\infty}\frac{h(y)(y-x)}{(y-x)^2+\delta^2}\,\mathrm{d}y
          \\
            =&\,\lim_{\delta\to 0^+}\lim_{\eps\to 0}\left( -\int_{\eps}^{+\infty}\frac{h(x-s)s}{s^2+\delta^2}\,\mathrm{d}s
            +
            \int_{\eps}^{+\infty}\frac{h(x+s)s}{s^2+\delta^2}\,\mathrm{d}s
            \right)
          \\
          =&\,\lim_{\delta\to 0^+}\int_{0}^{+\infty}\frac{(h(x+s)-h(x-s))\,s}{s^2+\delta^2} \;\mathrm{d}y\,,
      \end{align*}
      where we can switch $\lim$ and integral, as $\int_0^{+\infty}\frac{h(x+s)-h(x-s)}{s}\,\mathrm{d}s$ is well--defined as the principal value of $\,\frac{h(x)}{x}$ since $\|h'\|_{L^\infty}<\infty\,.$
  \end{proof}

\vskip0.25cm

 Now, armed with Proposition~\ref{Zd-solution multivalue prop}, let us proceed to prove the extended version of the statement of this proposition to Theorem~\ref{Theorem solution mutivalué- intro}.
 
 \begin{proof}[Proof of Theorem~\ref{Theorem solution mutivalué- intro}]
 Let $u_0\in \LtwoR\cap C^1(\R)\,,$ such that $u_0$ is tending to $0$ at infinity and $u_0'\in L^\infty(\R)\,$.  For such $u_0\,$,
 the $C^1$--function $\gamma_t(y):=y\mp2t \va{u_0(y)}^2$ is asymptotically equivalent to $y$ at $\pm \infty\,.$ Moreover, since $u_0'$ is bounded in $L^\infty(\R)$\,, 
 then for all $t\in\R\,,$ and for any $x\in\R$ that is not a critical value of $\gamma_t\,,$ the equation $\gamma_t(y)=x$ has a finite number of real solutions
 \be\label{sol simple}
    y_0(t,x)< \ldots< y_{2\ell}(t,x)\,.
 \ee
Besides, note that by the Sard theorem,  the set of critical values of $\gamma_t$ has zero Lebesgue measure. Moreover, since 
\[
\gamma_t'(y)=1+2t \big(\va{u_0(y)}^2\big)'\longrightarrow 1\,, \qquad y\to \pm\infty\,,
\]
then the set of critical points 
 $\lracc{y\;; \;\gamma_t'(y)=0}$ is compact for a given $t$\,, 
 so that its image --the set of critical values of $\gamma_t$-- is compact, and hence in particular closed. Thus,  let $\Omega$ be any open connected set (for the $x$ variable) where \eqref{sol simple} is satisfied.
The idea, at this stage of the proof, is to deduce the result from Proposition~\ref{Zd-solution multivalue prop}\,. Thus, by using a standard mollifier, we approximate $u_0$ in $L^2(\R)\cap C^1(\R)$ by a sequence of rational functions $(u_0^\delta)$ belonging to the Hardy space. 
   Now,  take $\omega$ to be any arbitrary open subset of $\Omega$ such that $\overline{\omega}$ is compact. Therefore, for $\delta$ small enough,
 \[
    y+2t \va{u_0^\delta(y)}^2=x\,,\qquad x\in\omega\,,
 \]
has $2\ell+1$ solutions
\[
    y_0^\delta(t,x)< \ldots< y_{2\ell}^\delta(t,x)\,.
\]
Since $u_0^\delta$ is a rational function in the Hardy space, then by Proposition~\ref{Zd-solution multivalue prop}\,, 
\[
 ZD_{\pm}[u_0^\del] (t,x)=\mathrm{e}^{i \varphi_\delta(x)}\left(\mp i\, \frac{\va{t}}{t}\right)^\ell \prod_{k=0}^{2\ell}\va{u_0^\delta(y_k^\delta)}^{(-1)^k} ,
\]
where 
\[
    \varphi_\delta(x)=\arg(u_0^\delta(x))+\frac{1}{2\pi}\int_0^{+\infty}
    \frac{1}{s}\log\left(\frac{\ s\mp 2t \va{u_0^\delta(x+s)}^2}{-s\mp 2t \va{u_0^\delta(x-s)}^2}\frac{\prod_{k=0}^{2\ell}(x-s-y_k^\delta)}{\prod_{k=0}^{2\ell}(x+s-y_k^\delta)}\right)\mathrm{d}s\,.
\]
By passing to the limit as $\delta\to 0\,,$ and using that $u_0^\del\to u_0$ in $L^2\cap C^1$ so that $y_k^\delta(t,x)\to y_k(t,x)\,,$ and by the weak limit of \eqref{cv faible du flot}\,, we deduce for every $x\in\omega\,,$ formula~\eqref{zd[u0]-multivalue solution-intro}\,. Now, since $\omega$ is chosen arbitrarily in $\Omega\,,$ this achieves the proof.

 \end{proof}

An immediate consequence of Theorem~\ref{Theorem solution mutivalué- intro} is the following corollary.

\begin{corollary}
    Let $u_0\in L^2_+(\R)\cap L^\infty(\R)$ (with $\|u_0\|_{L^2}<\sqrt{2\pi}$\, in the focusing case), then for all $t\in\R\,,$
    \[
    \|\zd(t)\|_{L^\infty}\leq \|u_0\|_{L^\infty}\,.
    \]
\end{corollary}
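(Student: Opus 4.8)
The statement is announced as an immediate consequence of Theorem~\ref{Theorem solution mutivalué- intro}, so the strategy is: prove the $L^\infty$ bound first for initial data meeting the hypotheses of that theorem (using the explicit branch formula), and then remove those hypotheses by mollification, transporting the estimate through the weak stability \eqref{cv faible du flot}.

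\textbf{Step 1 (nice data, the heart of the argument).} Assume $u_0\in L^2_+(\R)\cap C^1(\R)$ tends to $0$ at infinity with $u_0'\in L^\infty(\R)$ (and $\|u_0\|_{L^2}<\sqrt{2\pi}$ in the focusing case). Fix $t\in\R$. If $t=0$ then $\zd(0)=u_0$ by \eqref{explicit formula R zero dispersion} and there is nothing to prove, so take $t\neq0$. By the Remark following Theorem~\ref{Theorem solution mutivalué- intro}, for a.e.\ $x\in\R$ one has $\log\va{\zd(t,x)}=\sum_{k=0}^{2\ell}(-1)^k\log\va{u_0(y_k)}$, where $y_0<\cdots<y_{2\ell}$ are the simple real roots of $y\mp 2t\va{u_0(y)}^2=x$ (after discarding the null set of $x$ for which some $y_k$ is a zero of $u_0$; since $u_0\in L^2_+(\R)\setminus\{0\}$ extends to $H^2(\C_+)$, its zero set on $\R$ is Lebesgue--null, and $u_0(y_k)=0$ would force $y_k=x$, hence $u_0(x)=0$). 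The key point is that along this root set $\va{u_0}^2$ is \emph{affine in the root}: from $\mp 2t\va{u_0(y_k)}^2=x-y_k$ we get
\[
\va{u_0(y_k)}^2=\frac{x-y_k}{\mp 2t}=\frac{\va{x-y_k}}{2\va{t}},\qquad k=0,\ldots,2\ell,
\]
the last equality because $\va{u_0(y_k)}^2\geq0$ forces $x-y_k$ and $\mp 2t$ to have the same sign; in particular all the $y_k$ sit strictly on one side of $x$. Hence $\bigl(\va{u_0(y_k)}\bigr)_{0\le k\le 2\ell}$ is strictly monotone in $k$ (decreasing if $\mp 2t>0$, increasing if $\mp 2t<0$). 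For an odd-length monotone real sequence $b_0,\ldots,b_{2\ell}$ one has $\sum_{k=0}^{2\ell}(-1)^kb_k\le b_0$ if it is nonincreasing (group as $b_0-(b_1-b_2)-\cdots-(b_{2\ell-1}-b_{2\ell})$) and $\sum_{k=0}^{2\ell}(-1)^kb_k\le b_{2\ell}$ if it is nondecreasing (group as $(b_0-b_1)+\cdots+(b_{2\ell-2}-b_{2\ell-1})+b_{2\ell}$). Taking $b_k=\log\va{u_0(y_k)}$ gives in both cases $\log\va{\zd(t,x)}\le \max_k\log\va{u_0(y_k)}\le\log\|u_0\|_{L^\infty}$ for a.e.\ $x$, i.e.\ $\|\zd(t)\|_{L^\infty}\le\|u_0\|_{L^\infty}$.

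\textbf{Step 2 (general data by mollification).} For $u_0\in L^2_+(\R)\cap L^\infty(\R)$ (with $\|u_0\|_{L^2}<\sqrt{2\pi}$ in the focusing case), set $u_0^n:=u_0*\rho_n$ for a mollifier $\rho_n$ with $\int\rho_n=1$. Then $\widehat{u_0^n}$ is supported in $[0,\infty)$ so $u_0^n\in L^2_+(\R)$; $u_0^n\in C^\infty$ with $\|(u_0^n)'\|_{L^\infty}\le\|u_0\|_{L^2}\|\rho_n'\|_{L^2}<\infty$; $\va{u_0^n(x)}\to0$ as $\va{x}\to\infty$ (estimating $u_0*\rho_n$ at $x$ by $\|u_0\|_{L^2(\{\va{y}\ge \va{x}-R_n\})}\|\rho_n\|_{L^2}$ with $R_n$ the support radius of $\rho_n$); and $\|u_0^n\|_{L^\infty}\le\|u_0\|_{L^\infty}$, $\|u_0^n\|_{L^2}\le\|u_0\|_{L^2}$, $u_0^n\to u_0$ in $L^2$. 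By Step~1, $\|ZD_\pm[u_0^n](t)\|_{L^\infty}\le\|u_0^n\|_{L^\infty}\le\|u_0\|_{L^\infty}$ for all $t$. By \eqref{cv faible du flot}, $\sup_{t\in[-T,T]}\va{ZD_\pm[u_0^n](t)-\zd(t)}\rightharpoonup0$ in $L^2(\R)$, which forces $ZD_\pm[u_0^n](t)\rightharpoonup\zd(t)$ in $L^2(\R)$ for each fixed $t$. Since the closed ball of radius $\|u_0\|_{L^\infty}$ in $L^\infty(\R)$ is closed under weak $L^2$ limits, we conclude $\|\zd(t)\|_{L^\infty}\le\|u_0\|_{L^\infty}$.

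\textbf{Main obstacle.} There is no deep difficulty once the branch formula of Theorem~\ref{Theorem solution mutivalué- intro} is available; the one point requiring care is the sign bookkeeping in Step~1 --- recognizing that the relation $\mp 2t\va{u_0(y)}^2=x-y$ both linearizes $\va{u_0}^2$ along the root set and pins all roots to one side of $x$, which is precisely what makes the alternating sum telescope --- together with the routine check that the mollified data satisfy all hypotheses of Theorem~\ref{Theorem solution mutivalué- intro} without inflating the $L^2$ or $L^\infty$ norms.
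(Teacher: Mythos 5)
Your proof is correct and follows essentially the same route as the paper: the relation $\va{u_0(y_k)}^2=(x-y_k)/(\mp 2t)$ makes $k\mapsto\va{u_0(y_k)}$ monotone because $k\mapsto y_k$ is, the alternating product/sum then telescopes to $\max\{\va{u_0(y_0)},\va{u_0(y_{2\ell})}\}\le\|u_0\|_{L^\infty}$, and the general case follows by mollification combined with the weak stability \eqref{cv faible du flot}. Your write-up merely fills in details the paper leaves implicit (the sign bookkeeping pinning the roots to one side of $x$, the null set where $u_0(y_k)=0$, and the weak $L^2$-closedness of the $L^\infty$ ball).
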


\begin{proof}
    In view of Theorem~\ref{Theorem solution mutivalué- intro}\,, we have for any $u_0\in \LtwoR\cap C^1(\R)\,,$ satisfying the property that $u_0$ is tending to $0$ at infinity\,,
    \[
           \va{\zd(t,x)}=\,\frac{ \displaystyle\prod_{k=0}^{\ell}\va{u_0(y_{2k})}}{\displaystyle\prod_{k=1}^\ell \va{u_0(y_{2k-1})}}\,,
       \]
    where $y_0 < \ldots < y_{2\ell}$ are solutions for the algebraic equation
       \[
            x-y_{k}=\mp 2t \va{u_0(y_k)}^2\,.
       \] 
       Therefore, by the monotonicity of $k\mapsto y_k$ we infer the monotonicity of $k\mapsto \va{u_0(y_k)}^2\,,$ so that we can deduce
       \[
            \va{\zd(t,x)}\leq \max\{\va{u_0(y_0)}\,, \va{u_0(y_{2\ell})}\}\leq \|u_0\|_{L^\infty}\,.
       \]
       The general case of $u_0 \in L^\infty(\R) \cap L^2(\R)$ follows by applying a standard mollifier to $u_0$ like the one described to prove Theorem~\ref{Theorem solution mutivalué- intro} and by using property \eqref{cv faible du flot} in Theorem~\ref{weak limit theorem zero dispersion}\,.
\end{proof}

\vskip0.5cm







\end{document}